\newcommand\MYhyperrefoptions{bookmarks=true,bookmarksnumbered=true,
	pdfpagemode={UseOutlines},plainpages=false,pdfpagelabels=true,
	colorlinks=true,linkcolor={black},citecolor={black},urlcolor={black},
	pdftitle={},
	pdfsubject={},
	pdfauthor={Mohammadhafez Bazrafshan and Nikolaos Gatsis},
	pdfkeywords={}}
\theoremstyle{plain}
\newtheorem*{theorem*}{\textbf{Theorem}}
\newtheorem{theorem}{\textbf{Theorem}}
\newtheorem*{definition*}{\textbf{Definition}}
\newtheorem{lemma}{Lemma}
\theoremstyle{remark}
\newtheorem*{remark*}{Remark}
\newcommand{\bmat}[1]{\begin{bmatrix} #1 \end{bmatrix}}
\newcommand{\mr}[1]{\mathrm{#1}}
\newcommand{\diag}{\mathrm{diag}}
\newcommand{\mb}[1]{\mathbf{#1}} 
\newcommand{\bs}[1]{\boldsymbol{#1}}  
\newcommand{\mbb}[1]{\mathbb{#1}}
\newcommand{\mc}[1]{\mathcal{#1}}
\newlength{\eqboxstorage}
\begin{document}
\makeatother
\title{Convergence of the Z-Bus Method for Three-Phase Distribution Load-Flow with ZIP Loads}

\author{Mohammadhafez~Bazrafshan,~\IEEEmembership{Student Member,~IEEE,}~and~%
        Nikolaos~Gatsis,~\IEEEmembership{Member,~IEEE}
        \thanks{Manuscript received May 26, 2016; revised November 22, 2016 and March 1, 2017; accepted April 26, 2017.  The authors are with the Department of Electrical and Computer Engineering, The University of Texas at San Antonio. E-mails: \{mohammadhafez.bazrafshan, nikolaos.gatsis\}@utsa.edu.  This material is based upon work supported by the National Science Foundation under Grant No. CCF-1421583.}}

\markboth{IEEE Transactions on Power Systems (Accepted)}%
{Bazrafshan and Gatsis}


\maketitle

\begin{abstract}
This paper derives a set of sufficient conditions guaranteeing that the load-flow problem in unbalanced three-phase distribution networks with wye and delta ZIP loads has a unique solution over a region that can be explicitly calculated from the network parameters.  It is also proved that the well-known Z-Bus iterative method is a contraction over the defined region, and hence converges to the unique solution. 
\end{abstract}

\begin{IEEEkeywords}
Three-phase distribution load-flow, Z-Bus method, ZIP load, contraction mapping, power flow
\end{IEEEkeywords}

\IEEEpeerreviewmaketitle

\section{Introduction}

\IEEEPARstart{E}{xistence} of three-, two-, and one-phase transmission lines with high $R/X$ ratios in distribution networks is predominant. Therefore, the Z-Bus iterative method  is preferred over classical Newton-Raphson methods  for distribution load-flow~\cite{Chen1991pf}. 
This paper deals with  the Z-Bus method when applied to practical distribution networks including three-phase wye and delta loads with constant-power, constant-current, and constant-impedance portions (ZIP loads), as well as potentially untransposed lines.  Conditions on the network components and loads guaranteeing the existence of a unique load-flow solution and the convergence of the Z-Bus method are derived. 

The availability of conditions guaranteeing power flow solution existence has several applications in power system planning and operations. In planning studies, such as conventional and distributed generation planning, conditions for solution existence serve as assurance that desirable operating conditions are feasible under future load demands. For example, such conditions provide network-specific guides on whether a certain lateral will have valid load-flow solutions over the course of years, as the loads increase \cite{LesieutreSauerPai1999, Molzahn2013}.

In operations, conditions for solution existence allow power engineers to determine whether the set of power injections and flows respect security constraints  of power system equipment  ~\cite{WuKumagai1982}.  Furthermore, the availability of solvability regions in terms of nodal voltages renders improved initial estimates for numerical algorithms and accelerates the online analysis required for real-time monitoring and control applications~\cite{bolognani2016}.

Specifically in regards to distribution networks, their weakly-meshed structure limits the number of feasible voltage profiles. This characteristic implies that it is harder to meet different demand patterns in  distribution networks \cite{ChiangBaran1990}. In this case, explicit sufficient conditions for solution existence that take into account various load types---that is, wye and delta ZIP loads---prove to be very useful.

The literature on existence of solutions to the power flow equations is extensive; see e.g., \cite{Molzahn2013} for a review on general power networks, and  recent works in~\cite{bolognani2016, YuTuritsyn2015,WangBernsteinBoudecPaolone2016} geared toward distribution networks.  Here, the prior art most closely related to the present work is reviewed. 

The single-phase equivalent of a power distribution network  is considered  in \cite{bolognani2016}.  Power flow equations are formulated so that  injection currents can be expressed as a fixed point of a quadratic map parametrized by constant-power loads and the network admittance matrix.  Sufficient conditions on the loads and the network admittance matrix are derived to guarantee existence of a power flow solution.

The work in \cite{YuTuritsyn2015} highlights that the injection currents can be expressed by a family of fixed-point quadratic maps  similarly to \cite{bolognani2016} but parametrized by a diagonal matrix. An algorithm is proposed for finding the diagonal matrix that expands the regions given in \cite{bolognani2016} where a power flow solution exists.

Convergence of the Z-Bus method interpreted as a fixed-point iteration is considered in  \cite{WangBernsteinBoudecPaolone2016} and \cite{ZhaoChiangKoyanagi2016} for single-phase distribution networks. Specifically, explicit conditions on constant-power loads and the network admittance matrix are presented in \cite{WangBernsteinBoudecPaolone2016} ensuring convergence of the Z-Bus method and existence of a unique solution for distribution load-flow under balanced assumptions. Interestingly, it is numerically verified in \cite{ZhaoChiangKoyanagi2016} that when distributed generation units are modeled as constant-power nodes, the voltage iterates produced by the Z-Bus method are indeed contracting.   

Single-phase power system modeling is valid under balanced three-phase operation, which is a  reasonably accurate assumption  for transmission networks.  In distribution networks, there is a prevalence of untransposed three-, two- and one-phase transmission lines  with unbalanced loading.  There is a gap in the literature, since theoretical results on solution existence and convergence of load-flow methods  overlook the underlying unbalanced nature of distribution systems.

This paper considers general unbalanced distribution networks that include transmission lines with one, two, or three available phases as well as wye and delta ZIP loads,  in contrast to \cite{bolognani2016,YuTuritsyn2015,WangBernsteinBoudecPaolone2016}, which only consider single-phase networks with constant-power loads. Sufficient conditions for the Z-Bus method  to be a contraction over a region explicitly defined in terms of the loads and the bus admittance matrix are derived. A consequence of the contraction is that power-flow equations are guaranteed to have a unique solution in the defined region.
 
In order to prove that the Z-Bus method is  a contraction mapping,  a candidate region of contraction is defined and the self-mapping and contraction properties are proved. These steps were followed in \cite{WangBernsteinBoudecPaolone2016} for single-phase networks with constant-power loads. However, the extension to  general unbalanced three-phase networks with wye and delta ZIP loads and missing phases adds a level of complexity that necessitates the development of novel methods to prove the convergence of the Z-Bus method.

The major contribution of this paper is to derive sufficient convergence conditions that handle the complexities arising from various combinations of load types and phasings in practical three-phase distribution networks. The conditions are expressed directly in terms of the network loads and admittance matrix, and guarantee the existence of a unique solution to the power flow equations. 
It becomes immediately apparent from these conditions that different load types contribute differently to the network solvability.  Moreover, the generality of the network model renders results that are readily applicable to  practical distribution networks without simplifying assumptions. To this end, the conditions are validated on several IEEE distribution test feeders.

In order to prove the aforementioned conditions, this paper develops a set of tools that advance the power flow analysis via contraction methods. These tools are particularly tailored to handle wye and delta connections, ZIP loads, and missing phases, and are expected to be useful to other researchers working in this area. In addition, this paper constructs interesting and non-trivial examples of networks where the Z-Bus method exhibits oscillatory behavior, and the conditions are not satisfied. One example amounts to a two-node network where the non-slack bus has constant-current and –impedance loads, but a constant-power injection (e.g., from a renewable energy source). The second example features a nontrivial unbalanced network with phase couplings in its lines.

\emph{Paper Organization:} The network and the ZIP load models with wye and delta configurations are introduced in  Section \ref{sec:model}. The Z-Bus method is briefly reviewed in Section~\ref{sec:zbus}. Section~\ref{sec:conditions}  presents the main theorem, which contains the sufficient conditions for the Z-bus method to be a contraction.  The theorem is verified numerically in Section \ref{sec:numericaltests}.  Section \ref{sec:conclusion} provides pointers to future work, and Appendices~\ref{sec:app:voltagebounds} and~\ref{sec:appendixProof} include a detailed proof of the main result.

\emph{Notation:} For a vector $\mb{u}$, $u_j$ denotes the $j$-th element, and $\diag(\mb{u})$ is a square matrix with elements of $\mb{u}$ on the main diagonal. The notation $\mb{v}=\{\mb{v}_n\}_{n \in \mc{N}}$ is used to denote a vector constructed from vertically stacking vectors $\mb{v}_n$ for $n \in \mc{N}$.  For a matrix $\mb{Z}$,   $\mb{Z}_{\bullet k}$ denotes the $k$-th column,   $\mb{Z}_{j\bullet}$ denotes the $j$-th row, $\mb{Z}_{jk}$ denotes the element in the $j$-th row and $k$-th column; and $[\mb{Z}_{\bullet k} ]_{k \in \mc{N}_Y}$ denotes a matrix constructed from the columns of $\mb{Z}$ in the set $\mc{N}_Y$.  The infinity norm for a vector $\mb{u}$ and a matrix $\mb{Z}$ is defined as $\|\mb{u}\|_{\infty}=\max_{j} |u_j|$ and  $\|\mb{Z}\|_{\infty}= \max_{j} \sum_{k} |\mb{Z}_{jk}|$, respectively.

\section{Network and load model}
\label{sec:model}

Motivated by  the various three-phase connections in  IEEE distribution test feeders, a detailed  network model is presented in this section. 

\subsection{Three-phase network modeling preliminaries}
A  power distribution network is represented by a graph $(\mc{N}, \mc{E})$, where $\mc{N}:= \{1, 2, \ldots, N\} \cup \{\mr{S}\}$ is the set of nodes  and  $\mc{E} \subset \mc{N} \times \mc{N} :=\{ (n,m) | n,m \in \mc{N}\}$ is the set of edges. Node $\mr{S}$ is considered the slack bus, i.e., the point of interconnection to the transmission network.   Furthermore, let $\mc{N}$ be partitioned as $\mc{N}=\mc{N}_Y \cup \mc{N}_\Delta \cup \{\mr{S}\}$ where $\mc{N}_Y$ and $\mc{N}_\Delta$ collect nodes with wye and delta connections, respectively.  

The set of available phases at node $n$ is denoted by $\Omega_n \subseteq \{a,b,c\}$.  For wye nodes, i.e.,  $n \in \mc{N}_Y$, $\Omega_n$ may have one, two, or three phases.  For delta nodes, i.e., $n \in \mc{N}_\Delta$,  $\Omega_n$ has at least two  phases. For future use, define $r(\phi)$ as the right shift of phase $\phi$ as follows: 
\begin{IEEEeqnarray}{rCl}
r(a)=b, r(b)=c, r(c)=a. 
\end{IEEEeqnarray}
 If $|\Omega_n|=3$, then we have that $r(\phi) \in \Omega_n$ for all phases $\phi \in \Omega_n$.  If $|\Omega_n|=2$, then there exists only one phase $\phi \in \Omega_n$ such that $r(\phi) \in \Omega_n$.  For example, if $\Omega_n=\{a,c\}$,  only phase $c$ has the property that $r(c)=a \in \Omega_n$.

For every node $n \in \cal N$, the complex line to neutral voltage vector  is denoted by $\mb{v}_n:= \{v_{n}^\phi\}_{\phi \in \Omega_n} \in \mbb{C}^{|\Omega_n|}$.  The slack bus voltage is fixed at $\mb{v}_{\mr{S}}=\{1, e^{-j\frac{2\pi}{3}}, e^{j\frac{2\pi}{3}}\}$.  The  line to neutral voltages are collected in a vector $\mb{v}=\{\mb{v}_n\}_{n \in \mc{N} \setminus \{\mr{S}\}}$. Moreover, we use a vector  $\mb{e}_n^{\phi \phi'} \in \mbb{C}^{|\Omega_n|}$ with entries of 1, -1, and possibly 0, such that $(\mb{e}_n^{\phi \phi'})^T\mb{v}_n=v_n^{\phi}- v_n^{\phi'}$ gives the line to line voltages between phases $\phi$ and $\phi'$.   It is easy to see that $(\mb{e}_n^{\phi \phi'})^T\mb{v}_n=-(\mb{e}_n^{\phi' \phi})^T\mb{v}_n$.    

Define further an index set $\mc{J}:=\{1, \ldots, J\}$ where $J=\sum_{n=1}^N |\Omega_n|$ is the length of vector $\mb{v}$, and each $j\in \mc{J}$ is a linear index corresponding to a particular pair $(n, \phi)$ with $n \in \mc{N} \setminus \{\mr{S}\}$ and $\phi \in \Omega_n$. In this case, we write the operation $j=\texttt{Lin}(n,\phi)$. The operation $\texttt{Node}[\texttt{Lin}(n,\phi)] = \texttt{Node}[j] = n$ relates the index $j \in \mc{J}$ to the corresponding node $n$.  Define the set $\mc{J}_n:=\{j \ | \ \texttt{Node}[j]=n\}$ as the set of all linear indices $j$ that correspond to node $n$.  Finally,  set $\mc{J}$ is partitioned as $\mc{J}= \mc{J}_Y \cup \mc{J}_{\Delta}$, where $\mc{J}_Y= \bigcup_{n \in \mc{N}_Y} \mc{J}_{n}$ and $\mc{J}_{\Delta} = \bigcup_{n \in \mc{N}_\Delta} \mc{J}_n$.

\subsection{Three-phase load models}
For a  load at node $n$, the complex  vector of net current injections is denoted by $\mb{i}_n=\{i_n^\phi\}_{\phi \in \Omega_n}$ and we define $\mb{i}= \{ \mb{i}_n\}_{n \in \mc{N}\setminus \{\mr{S}\}}$.  Due to existence of loads,   the nodal net current injection $\mb{i}$ is a function of  nodal voltages $\mb{v}$. This dependence is denoted by $\mb{i}_n(\mb{v}_n)$. According to the ZIP load model,   $\mb{i}_n(\mb{v}_n)$  is  composed of  currents from  constant-power loads $\mb{i}_{\mr{PQ}_n} =\bigl\{ i_{\mr{PQ}_n}^{\phi} \bigr\}_{\phi \in \Omega_n}$, constant-current loads $\mb{i}_{\mr{I}_n}= \bigl\{i_{\mr{I}_n}^{\phi}\bigr\}_{\phi \in \Omega_n}$, and constant-impedance loads $\mb{i}_{\mr{Z}_n}=\bigl\{ i_{\mr{Z}_n}^{\phi}\bigr\}_{\phi \in \Omega_n}$.  For  $n \in \mc{N} \backslash \{\mr{S}\}$ and $\phi \in \Omega_n$ we have that
\begin{IEEEeqnarray}{rCl}
i_{n}^\phi(\mb{v}_n) =  i_{\mr{PQ}_n}^{\phi}(\mb{v}_n) + i_{\mr{I}_n}^{\phi} (\mb{v}_n)+ i_{\mr{Z}_n}^{\phi}(\mb{v}_n) \label{eqn:netI}
\end{IEEEeqnarray}
where functions $i_{\mr{PQ}_n}^{\phi}(\mb{v}_n)$, $i_{\mr{I}_n}^{\phi}(\mb{v}_n)$, and $i_{\mr{Z}_n}^\phi(\mb{v}_n)$  are defined in \eqref{eqnGroup:wyeloads} for wye connections  and in  \eqref{eqnGroup:deltaloads} for delta connections.  

Specifically, for  wye connections  ($n \in \mc{N}_Y$ and $\phi \in \Omega_n$), the ZIP load components are
\begin{subequations}
\label{eqnGroup:wyeloads}
\begin{IEEEeqnarray}{rCl}
 i_{\mr{PQ}_n}^{\phi}(\mb{v}_n) = -(s_{L_n}^{\phi} / {v_n^\phi})^* \label{eqn:wyeloadPQ} \\
 i_{\mr{I}_n}^{\phi}(\mb{v}_n) =  - \frac{v_n^{\phi}}{|v_n^{\phi}|} i_{L_n}^{\phi},  \label{eqn:wyeloadI} \\
 i_{\mr{Z}_n}^{\phi}(\mb{v}_n) = - y_{L_n}^{\phi}v_n^\phi,  \label{eqn:wyeloadZ}
\end{IEEEeqnarray}
\end{subequations}
where $s_{L_n}^\phi$ is the nominal power in the constant-power portion of the ZIP model, $i_{L_n}^\phi$ is the nominal current for the constant-current portion of the ZIP model, and $y_{L_n}^\phi$ is the nominal admittance in the  constant-impedance portion of the ZIP model at node $n \in \mc{N}_{Y}$ and phase $\phi \in \Omega_n$.  

For delta connections ($n \in \mc{N}_\Delta$ and $\phi \in \Omega_n$), the ZIP load components are
\begin{subequations}
\label{eqnGroup:deltaloads}
\begin{IEEEeqnarray}{rCll}
i_{\mr{PQ}_n}^{\phi}(\mb{v}_n) &=&- \sum\limits_{ \phi' \in \Omega_n \backslash \{ \phi \}} 
\left(\frac{ s_{L_n}^{\phi \phi'}}{(\mb{e}_{n}^{\phi \phi'})^T\mb{v}_n}\right)^*  \label{eqn:deltaloadPQ} \\
i_{\mr{I}_n}^{\phi}(\mb{v}_n)&=&-  \sum\limits_{ \phi' \in \Omega_n \backslash \{ \phi \}} i_{L_n}^{\phi \phi'} \frac{(\mb{e}_{n}^{\phi \phi'})^T\mb{v}_n}{|(\mb{e}_{n}^{\phi \phi'})^T\mb{v}_n|} \label{eqn:deltaloadI}\\
i_{\mr{Z}_n}^{\phi}(\mb{v}_n)&=& -\sum\limits_{ \phi' \in \Omega_n \backslash \{ \phi \}}   y_{L_n}^{\phi \phi'} (\mb{e}_{n}^{\phi \phi'})^T\mb{v}_n, \label{eqn:deltaloadZ}\IEEEeqnarraynumspace  
\end{IEEEeqnarray}
\end{subequations}
where $s_{L_n}^{\phi \phi'}$, $i_{L_n}^{\phi \phi'}$ , and $y_{L_n}^{\phi \phi'}$  are respectively the nominal power,  current, and admittance in the ZIP model for nodes $n \in \mc{N}_\Delta$ and over phases $\phi, \phi' \in \Omega_n$. For $n \in \mc{N}_{\Delta}$ and $\phi, \phi' \in \Omega_n$, we have that $s_{L_n}^{\phi \phi'} = s_{L_n}^{\phi' \phi}$, $i_{L_n}^{\phi \phi'}= i_{L_n}^{\phi'\phi}$, and $y_{L_n}^{\phi \phi'}=y_{L_n}^{\phi'\phi}$.

Notice that the constant-current loads in \eqref{eqn:wyeloadI} and \eqref{eqn:deltaloadI} rightfully adhere to the proper definition given in \cite{loadmodels}, where it is emphasized that the power varies directly with voltage magnitude.  To see this, one can calculate  the apparent power consumption for the load model in \eqref{eqn:wyeloadI} which gives  $s_n^{\phi}=v_n^{\phi} [({v_n^{\phi}}/{|v_n^{\phi}|}) i_{L_n}^{\phi}]^* = |v_n^\phi| (i_{L_n}^{\phi})^*$, i.e., the apparent power is only a function of the magnitude of the voltage $|v_n^{\phi}|$. It is easy to see that in this case, the load current magnitude is $|i_{L_n}|$ and the load power factor is $\frac{\mr{Re}[i_{L_n}^\phi]}{|i_{L_n}^{\phi}|} $, both of which are constant values, concluding that these models are  in line with the definition in \cite[pp. 315]{KerstingBook2001} as well.  It should be noted that \eqref{eqn:wyeloadI} and \eqref{eqn:deltaloadI} are different than the constant-current-phasor model employed in \cite{Garces2016}.

For future use, if $k=\texttt{Lin}(n,\phi)$ where $n \in \mc{N}_{Y}$ and $\phi \in \Omega_n$, we define $s_L^k=s_{L_n}^{\phi}$ and $i_L^k=i_{L_n}^\phi$.                                                                                                                                                                              For  $k=\texttt{Lin}(n,\phi)$ where $n \in \mc{N}_{\Delta}$ and $\phi \in \Omega_n$, if $r(\phi) \in \Omega_n$,  define  $s_L^k=s_{L_n}^{\phi r(\phi)}$,  $i_L^k=i_{L_n}^{\phi r(\phi)}$, and $\mb{e}_k=\mb{e}_n^{\phi r(\phi)}$.  If $r(\phi) \notin \Omega_n$, then $\Omega_n=\{\phi, \phi'\}$, and define $s_L^k=0$, $i_L^k=0$, and $\mb{e}_k=\mb{e}_n^{\phi \phi'}$.

\section{The Z-Bus Method}
\label{sec:zbus}
For a three-phase network, the multidimensional Ohm's law is given by
\begin{IEEEeqnarray}{rCl}
\bmat{ \mb{i} \\ \mb{i}_{\mr{S}} } = \bmat{ \mb{Y} & \mb{Y}_{\mr{NS}} \\ \mb{Y}_{\mr{SN}} & \mb{Y}_{\mr{SS}} }\bmat{\mb{v} \\ \mb{v}_{\mr{S}}},  \label{eqn:ohm}
\end{IEEEeqnarray}
where $\mb{i}_{\mr{S}}$ is the complex current injection of the slack bus.   Matrices $\mb{Y}$, $\mb{Y}_{\mr{NS}}$, $\mb{Y}_{\mr{SN}}$, and $\mb{Y}_{\mr{SS}}$ are formed by concatenating the admittance matrices  of transmission lines, transformers and voltage regulators, given e.g., in \cite{Chen1991}.
It follows from  \eqref{eqn:ohm} that
\begin{IEEEeqnarray}{rCl}
\mb{i} (\mb{v}) &=& \mb{Y} \mb{v} + \mb{Y}_{\mr{NS}} \mb{v}_{\mr{S}}, \label{eqn:iv}
\end{IEEEeqnarray}
where the dependence of injected currents on nodal voltages is shown explicitly. 

Using~\eqref{eqn:netI},  $\mb{i}(\mb{v})$ can be decomposed into three parts:
\begin{IEEEeqnarray}{rCl}
\mb{i}(\mb{v}) = \mb{i}_{\mr{PQ}} (\mb{v}) + \mb{i}_{\mr{I}}(\mb{v}) + \mb{i}_{\mr{Z}}(\mb{v}), \label{eqn:decomposeIv}
\end{IEEEeqnarray}
where $\mb{i}_{\mr{PQ}}(\mb{v})=\{\mb{i}_{\mr{PQ}_n}\}_{n \in \mc{N} \backslash \{\mr{S} \}}$,  $\mb{i}_{\mr{I}}(\mb{v})=\{\mb{i}_{\mr{I}_n}\}_{n \in \mc{N} \backslash \{\mr{S} \}}$, $\mb{i}_{\mr{Z}}(\mb{v})=\{\mb{i}_{\mr{Z}_n}\}_{n \in \mc{N} \backslash \{\mr{S} \}}$. Moreover, \eqref{eqn:wyeloadZ} and \eqref{eqn:deltaloadZ} reveal that  $\mb{i}_{\mr{Z}}(\mb{v})$ can be written as a linear function of $\mb{v}$ as follows:
\begin{IEEEeqnarray}{rCl}
\mb{i}_{\mr{Z}}(\mb{v}) = -  \mb{Y}_{\mr{L}} \mb{v},  \label{eqn:ivYloads}
\end{IEEEeqnarray}
where $\mb{Y}_{\mr{L}} \in \mbb{C}^{J \times J}$ has entries given by
\begin{subequations}
\label{eqngroup:formingY_L}
\begin{IEEEeqnarray*}{rClC}
\mb{Y}_{\mr{L}}(j,j) &=& y_{L_n}^{\phi}, \text{if } 
 j=\texttt{Lin}(n,\phi), n\in\mc{N}_Y, \label{eqn:formingY_Lyelements}\\
\mb{Y}_{\mr{L}}(j,j) &=& \sum\limits_{\phi ' \in \Omega_n \backslash \{\phi\}} y_{L_n}^{\phi\phi'}, \text{if },  j=\texttt{Lin}(n,\phi), n\in\mc{N}_\Delta, \label{eqn:formingY_Ldeltaelements1} \IEEEeqnarraynumspace \\
\mb{Y}_{\mr{L}}(j,k) &=& -y_{L_n}^{\phi \phi'}, j = \texttt{Lin}(n,\phi),  k=\texttt{Lin}(n,\phi'), n \in \mc{N}_\Delta,  \IEEEeqnarraynumspace \label{eqn:forminY_Ldeltaelements2} 
\end{IEEEeqnarray*}
\end{subequations}
and the remaining entries of $\mb{Y}_L$ are all zero. 

Substituting \eqref{eqn:decomposeIv} and \eqref{eqn:ivYloads} in \eqref{eqn:iv} and after some manipulation the following fixed-point equation for  $\mb{v}$ is obtained:
\begin{IEEEeqnarray}{rCl}
\mb{v}=\mb{Z}\left[ \mb{i}_{\mr{PQ}} (\mb{v}) +\mb{i}_{\mr{I}}(\mb{v})\right]+\mb{w},  \label{eqn:zbusv1}
\end{IEEEeqnarray}
where $\mb{Z}=(\mb{Y}+\mb{Y}_{\mr{L}})^{-1}$, and
\begin{IEEEeqnarray}{rCl}
\mb{w}= - \mb{Z}\mb{Y}_{\mr{NS}} \mb{v}_{\mr{S}}. \label{eqn:w}
\end{IEEEeqnarray}

Equation \eqref{eqn:zbusv1} lends itself to the well-known Z-Bus method for three-phase networks, outlined in e.g., \cite{Chen1991pf}:
\begin{IEEEeqnarray}{rCl}
\mb{v}[t+1]=\mb{Z}\left[ \mb{i}_{\mr{PQ}} (\mb{v}[t]) +\mb{i}_{\mr{I}}(\mb{v}[t])\right]+\mb{w},  \label{eqn:zbusv}
\end{IEEEeqnarray}
where $\mb{v}[t]$ is the value of $\mb{v}$ at iteration $t$. Notice that \eqref{eqn:zbusv} is an application of Picard's iteration for solving nonlinear systems of equations \cite[pp. 182]{OrtegaRheinBoldt1970}.

\section{Sufficient conditions for convergence of the Z-{Bus} method to a unique solution}
\label{sec:conditions}
In this section,   sufficient conditions are presented such that the iteration of the Z-Bus method~\eqref{eqn:zbusv} is \textit{contracting}, i.e., the iteration  converges geometrically to a unique solution of~\eqref{eqn:zbusv1}.  

To present these conditions, it is beneficial to make a change of variables in \eqref{eqn:zbusv}. Specifically, let $\mb{u} = \bs{\Lambda} ^{-1}\mb{v}[t]$ where $\bs{\Lambda} \in \mbb{C}^{J \times J}$ is an invertible diagonal matrix with entries $\lambda_j$.   Matrix $\mb{\Lambda}$ will serve as a design parameter which can be leveraged to potentially  expand the convergence region of the Z-Bus method. Parameterizing $\mb{v}$ by $\mb{u}$ and multiplying both sides of \eqref{eqn:zbusv} by $\bs{\Lambda}^{-1}$ yield the following iteration for $\mb{u}$:
 \begin{IEEEeqnarray}{rCl}
\mb{u}[t+1]=\mb{T}(\mb{u}[t]) \label{eqn:zbusu}, 
\end{IEEEeqnarray}
where $\mb{T}(\mb{u}): \mbb{C}^J \rightarrow \mbb{C}^J$ is the following mapping
\begin{IEEEeqnarray}{rCl}
\mb{T}(\mb{u})=\bs{\Lambda}^{-1} \mb{Z}\left[\mb{i}_{\mr{PQ}} ( \bs{\Lambda} \mb{u}) + \mb{i}_{\mr{I}}(\bs{\Lambda} \mb{u}) \right] + \bs{\Lambda}^{-1}\mb{w},  \label{eqn:Tu}\IEEEeqnarraynumspace
\end{IEEEeqnarray}
and the Z-Bus method of \eqref{eqn:zbusv} can be equivalently written as
\begin{IEEEeqnarray}{rCl}
\mb{v}[t+1]=\mb{\Lambda}\mb{T}(\mb{\Lambda}^{-1}\mb{v}[t]) \label{eqn:zbusvT}. 
\end{IEEEeqnarray}
Since the iterations in \eqref{eqn:zbusu} and \eqref{eqn:zbusvT} have a one-to-one correspondence, convergence of \eqref{eqn:zbusu} to a fixed point $\mb{u}^{\mr{fp}}=\mb{T}(\mb{u}^{\mr{fp}})$ is equivalent to the  convergence of \eqref{eqn:zbusvT} to a respective solution $\mb{v}^{\mr{fp}}= \mb{\Lambda}\mb{u}^{\mr{fp}}$ of~\eqref{eqn:zbusv1}.  

In what follows, conditions such that the mapping  $\mb{T}(\mb{u})$ in \eqref{eqn:Tu} is a contraction over a region $\mb{D} \subseteq \mbb{C}^J$ are derived.   As a consequence, $\mb{T}(\mb{u})$ has a unique fixed point over $\mb{D}$, and iterations \eqref{eqn:zbusu} converge geometrically to the unique fixed point \cite[Prop. 3.1.1]{BertsekasTsitsiklis1989}.  The definition of contraction is as follows:
\begin{definition*}
For a closed set $\mb{D} \subseteq \mbb{C}^J$, a mapping  $\mb{T}(\mb{u})$ is  a contraction  on $\mb{D}$  if the following two properties hold:
\begin{enumerate}
\item \textit{Self-mapping property:}   $ \mb{u} \in \mb{D}  \Rightarrow \mb{T}(\mb{u}) \in \mb{D}$.
\item \textit{Contraction property:}  $\mb{u}, \tilde{\mb{u}} \in \mb{D} \Rightarrow \|\mb{T}(\mb{u}) - \mb{T}(\tilde{\mb{u}}) \| \leq \alpha \|\mb{u} - \tilde{\mb{u}}\|$, 
\end{enumerate}
where $\|.\|$ is some norm and $\alpha \in [0,1)$ is a constant.
\end{definition*}
In regards to the previous definition, any number $\alpha \in [0,1)$ that upperbounds the ratio $\frac{\|\mb{T}(\mb{u})- \mb{T}(\tilde{\mb{u})}\|}{\| \mb{u}- \tilde{\mb{u}}\|}$ uniformly for all $\mb{u}, \tilde{\mb{u}} \in  \mb{D}$ ($\mb{u} \neq \tilde{\mb{u}}$), is called \emph{contraction modulus}.

In order to state the conditions for~\eqref{eqn:Tu} to be a contraction map, the following quantities are defined:
 \begin{subequations}
 \label{eqnGroup:feederQuantities}
 \begin{IEEEeqnarray}{rCl}
 \IEEEeqnarraymulticol{3}{l}{\underline{w}=\min_{k \in \mc{J}} |w_k| , \bar{\lambda} = \max_{k \in \mc{J}} |\lambda_k|,  \underline{\rho}=\min_{k \in \mc{J}_{\Delta}} \{|\mb{e}_{k}^T \mb{w}_{\texttt{Node}[k]}|\}, } \label{eqnGroup:knownminimums}\IEEEeqnarraynumspace \\
\mb{s}^Y&=& \{s_{L}^k\}_{k \in \mc{J}_Y}  \in \mbb{C}^{|\mc{J}_Y|}, \mb{i}^Y = \{i_{L}^k\}_{k \in \mc{J}_Y} \in \mbb{C}^{|\mc{J}_Y|}, \IEEEeqnarraynumspace \\
\mb{w}^Y &=& \{w_k\}_{k \in \mc{J}_Y} \in \mbb{C}^{|\mc{J}_Y|},
\mb{Z}^Y = [ \mb{Z}_{\bullet k} ]_{k \in \mc{J}_{Y}} \mbb{C}^{J \times |\mc{J}_Y|}, \IEEEeqnarraynumspace\label{eqn:zy}   \\
\mb{\Lambda}^Y &=& \diag(\{\lambda_k\}_{k \in \mc{J}_Y}) \in \mbb{C}^{|\mc{J}_Y| \times |\mc{J}_Y|},\label{eqn:lambday} \\
\mb{s}^\Delta &=& \{\mb{s}_{L}^k\}_{k \in \mc{J}_\Delta} \in \mbb{C}^{|\mc{J}_\Delta|},   
\mb{i}^\Delta = \{\mb{i}_{L}^k\}_{k \in \mc{J}_\Delta} \in \mbb{C}^{|\mc{J}_\Delta|},  \label{eqn:idelta} \\
\mb{w}^{\Delta} &=&\{ \mb{e}_k^T \mb{w}_{\texttt{Node}[k]} \}_{k \in \mc{J}_{\Delta}}\in \mbb{C}^{|\mc{J}_\Delta|}, \label{eqn:wdelta}\\
\mb{Z}^\Delta &=& [ \mb{Z}_{\bullet l_k}^{\Delta} ]_{k \in \mc{J}_{\Delta}}  \in \mbb{C}^{J \times |\mc{J}_{\Delta}|}, \label{eqn:zdelta}\\
\IEEEeqnarraymulticol{3}{l}{ \text{ where } l_k \in \{1,\ldots, |\mc{J}_{\Delta}| \} \text{ is the order of $k$ in $\mc{J}_{\Delta}$},  \text{ and } } \notag \\
&&  \mb{Z}_{\bullet l_k}^{\Delta}= \notag  \left\{\begin{IEEEeqnarraybox}[
\IEEEeqnarraystrutmode
\IEEEeqnarraystrutsizeadd{0pt}
{0pt}
][c]{lCl}
\mb{Z}_{\bullet k} - \mb{Z}_{\bullet k'},& &\: \text{if }  \phi, r(\phi) \in \Omega_n, \IEEEeqnarraynumspace\\
\mb{0}_J,& & \: \text{if } \phi \in \Omega_n, r(\phi) \notin \Omega_n, \IEEEeqnarraynumspace
\end{IEEEeqnarraybox}
\, \right. \\
&& \text{with } k=\texttt{Lin}(n,\phi), k'=\texttt{Lin}(n,r(\phi)),   \notag  \\
\mb{\Lambda}^\Delta &=&\diag(\{ \max\nolimits_{l \in \mc{J}_{\texttt{Node}[k]}} |\lambda_l |  \}_{ k \in \mc{J}_{\Delta}}) \in \mbb{C}^{ |\mc{J}_{\Delta}| \times |\mc{J}_{\Delta}|}. \label{eqn:lambdadelta}  \IEEEeqnarraynumspace 
\end{IEEEeqnarray}
\end{subequations}
In  \eqref{eqnGroup:feederQuantities}, the notation $\mb{w}_\mb{n}=\{w_k\}_{k \in \mc{J}_n}$ relates the indices in $\mb{w}$ to corresponding ones in $\mb{v}$.  Respectively for wye and delta connections, quantities $\mb{s}^Y$ and $\mb{s}^{\Delta}$ collect all constant-power loads,  $\mb{i}^Y$ and $\mb{i}^{\Delta}$ collect all constant-current loads, $\mb{Z}^Y$ and $\mb{Z}^{\Delta}$  collect corresponding columns of the impedance matrix $\mb{Z}$, and  $\mb{w}^Y$, $\mb{w}^{\Delta}$  collect the voltages induced by the slack bus. Matrix $\mb{\Lambda}^Y$ contains the elements of the design matrix $\mb{\Lambda}$ that correspond to wye nodes and matrix $\mb{\Lambda}^{\Delta}$ selects the maximum of the per node  entries of the design matrix $\mb{\Lambda}$ for delta nodes.

Theorem \ref{thm:main} establishes the  convergence of the Z-Bus method in three-phase distribution networks with ZIP loads, and is proved in Appendix \ref{sec:appendixProof} using intermediate results in Appendix~\ref{sec:app:voltagebounds}.

\begin{theorem}
\label{thm:main}
Define the closed ball  $\mb{D}_{R}:=\{\mb{u} \in \mbb{C}^J\ | \  \|\mb{u} - \bs{\Lambda}^{-1}\mb{w}  \|_{\infty} \le R\}$ where $R>0$.  
Then $\mb{T}(\mb{u})$ in \eqref{eqn:Tu} is a contraction mapping on $\mb{D}_R$ with contraction modulus $\alpha$ if the following four conditions hold:
\begin{equation}
\begin{IEEEeqnarraybox}{rCl}
1-R \bar{\lambda} / \underline{w} > 0, 
\end{IEEEeqnarraybox} \tag{C1} \label{eqn:linetoneutralcondition}
\end{equation}
\begin{equation}
\begin{IEEEeqnarraybox}{rCl}
1-2R \bar{\lambda} / \underline{\rho} > 0, 
\end{IEEEeqnarraybox} \tag{C2} \label{eqn:linetolinecondition}
\end{equation}
\begin{equation}
   \begin{IEEEeqnarraybox}{rCl}
&&\frac{\| \bs{\Lambda}^{-1}  \mb{Z}^Y \diag(\mb{s}^Y) \diag(\mb{w}^Y)^{-1} \|_{\infty}}{1- R \bar{\lambda} / \underline{w}} \notag  \IEEEeqnarraynumspace\\
&&+ \: \frac{\| \bs{\Lambda}^{-1}  \mb{Z}^\Delta \diag(\mb{s}^\Delta) \diag(\mb{w}^\Delta)^{-1} \|_{\infty}}{1  - 2R  \bar{\lambda}/\underline{\rho}}  \IEEEeqnarraynumspace  \notag \\
&&+\: \|  \bs{\Lambda}^{-1}\mb{Z}^Y \diag(\mb{i}^Y) \|_{\infty} + \|  \bs{\Lambda}^{-1} \mb{Z}^{\Delta} \diag(\mb{i}^\Delta) \|_{\infty} \le R  \hspace{-0.5cm} \IEEEeqnarraynumspace
\end{IEEEeqnarraybox} \tag{C3} \label{eqn:selfmappingcondition}
\end{equation}
\begin{equation}
\begin{IEEEeqnarraybox}{rCl}
\IEEEeqnarraymulticol{3}{l}{  \frac{ \|  \mb{\Lambda}^{-1} \mb{Z}^Y \diag(\mb{s}^Y) \mb{\Lambda}^Y [\diag(\mb{w}^Y)^{-1}]^2 \|_{\infty}}{ (1 - R \bar{\lambda}/ \underline{w})^2}} \notag  \IEEEeqnarraynumspace \\
 \IEEEeqnarraymulticol{3}{l}{+ \: \frac{2 \| \mb{\Lambda}^{-1} \mb{Z}^{\Delta} \diag(\mb{s}^\Delta) \mb{\Lambda}^{\Delta} [\diag(\mb{w}^\Delta)^{-1}]^2 \|_{\infty}} { (1 - 2R \bar{\lambda} / \underline{\rho})^2}} \IEEEeqnarraynumspace  \notag \\
 \IEEEeqnarraymulticol{3}{l}{+\: \frac{2 \| \mb{\Lambda}^{-1} \mb{Z}^Y \diag(\mb{i}^Y) \mb{\Lambda}^Y \diag(\mb{w}^Y)^{-1}\|_{\infty}}{1- R \bar{\lambda}/ \underline{w}}} \notag \IEEEeqnarraynumspace \\
  \IEEEeqnarraymulticol{3}{l}{+ \: \frac{ 4\| \mb{\Lambda}^{-1} \mb{Z}^{\Delta} \diag(\mb{i}^\Delta) \mb{\Lambda}^{\Delta} \diag(\mb{w}^\Delta)^{-1}  \|_{\infty}} { 1 - 2R \bar{\lambda} / \underline{\rho}} \le \alpha <1. \hspace{-0.5cm}}      \IEEEeqnarraynumspace 
 \end{IEEEeqnarraybox}
 \tag{C4} \label{eqn:contractioncondition} 
\end{equation} 
\end{theorem}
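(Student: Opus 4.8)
The plan is to verify the two defining properties of a contraction for the map $\mb{T}$ of \eqref{eqn:Tu} on $\mb{D}_R$ directly: the self-mapping property, which will follow from (C1)--(C3), and the Lipschitz property, which will follow from (C1), (C2) and (C4). The bridge from the abstract condition $\mb{u}\in\mb{D}_R$ to concrete estimates on the load currents is a pair of voltage lower bounds. Setting $\mb{v}=\bs{\Lambda}\mb{u}$, membership $\mb{u}\in\mb{D}_R$ says $|v_k-w_k|=|\lambda_k|\,|u_k-w_k/\lambda_k|\le R\bar{\lambda}$ for every $k\in\mc{J}$; hence $|v_k|\ge|w_k|-R\bar{\lambda}\ge\underline{w}-R\bar{\lambda}$, and for every $k\in\mc{J}_\Delta$, since $\mb{e}_k$ has exactly one $+1$ and one $-1$ entry, $|\mb{e}_k^T\mb{v}_{\texttt{Node}[k]}|\ge|\mb{e}_k^T\mb{w}_{\texttt{Node}[k]}|-2R\bar{\lambda}\ge\underline{\rho}-2R\bar{\lambda}$. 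Conditions (C1) and (C2) make these lower bounds strictly positive, so the reciprocals in \eqref{eqn:wyeloadPQ} and \eqref{eqn:deltaloadPQ} are well defined on $\mb{D}_R$; and because $|w_k|\ge\underline{w}$ one in fact gets the sharper bounds $|v_k|^{-1}\le|w_k|^{-1}(1-R\bar{\lambda}/\underline{w})^{-1}$ and $|\mb{e}_k^T\mb{v}_{\texttt{Node}[k]}|^{-1}\le|\mb{e}_k^T\mb{w}_{\texttt{Node}[k]}|^{-1}(1-2R\bar{\lambda}/\underline{\rho})^{-1}$. These are the intermediate voltage bounds of Appendix~\ref{sec:app:voltagebounds}.

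The second step is to recast $\mb{T}$ in a matrix form matched to the quantities in \eqref{eqnGroup:feederQuantities}. For wye nodes the model \eqref{eqnGroup:wyeloads} gives the wye block of $\mb{Z}\!\left[\mb{i}_{\mr{PQ}}(\mb{v})+\mb{i}_{\mr{I}}(\mb{v})\right]$ as $-\mb{Z}^Y\!\left[\diag(\mb{v}^Y)^{-1}\mb{s}^Y\right]^{*}-\mb{Z}^Y\diag(\mb{d}^Y)\,\mb{i}^Y$, where $\mb{v}^Y=\{v_k\}_{k\in\mc{J}_Y}$ and $\mb{d}^Y$ collects the unit-modulus factors $v_k/|v_k|$. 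For delta nodes the inner sums over $\phi'$ in \eqref{eqn:deltaloadPQ}--\eqref{eqn:deltaloadI} are re-indexed with the right-shift $r(\cdot)$: a pair $\{\phi,r(\phi)\}\subseteq\Omega_n$ generates the term carrying $s_{L_n}^{\phi r(\phi)}$ in row $\texttt{Lin}(n,\phi)$ against column $\mb{Z}_{\bullet\,\texttt{Lin}(n,\phi)}$ and again in row $\texttt{Lin}(n,r(\phi))$ against column $\mb{Z}_{\bullet\,\texttt{Lin}(n,r(\phi))}$ with the opposite sign (because $s_{L_n}^{\phi\phi'}=s_{L_n}^{\phi'\phi}$ while $\mb{e}_n^{\phi\phi'}=-\mb{e}_n^{\phi'\phi}$), so its net coefficient is the column difference $\mb{Z}_{\bullet\,\texttt{Lin}(n,\phi)}-\mb{Z}_{\bullet\,\texttt{Lin}(n,r(\phi))}$; if $r(\phi)\notin\Omega_n$ the corresponding coefficient vanishes. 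This is precisely the construction of $\mb{Z}^\Delta$ in \eqref{eqn:zdelta}, and it yields the delta block $-\mb{Z}^\Delta\!\left[\diag(\mb{v}^\Delta)^{-1}\mb{s}^\Delta\right]^{*}-\mb{Z}^\Delta\diag(\mb{d}^\Delta)\,\mb{i}^\Delta$, with $\mb{v}^\Delta=\{\mb{e}_k^T\mb{v}_{\texttt{Node}[k]}\}_{k\in\mc{J}_\Delta}$ (the load-current analogue of $\mb{w}^\Delta$) and $\mb{d}^\Delta$ unit-modulus. Thus $\mb{T}(\mb{u})-\bs{\Lambda}^{-1}\mb{w}=\bs{\Lambda}^{-1}\mb{Z}\!\left[\mb{i}_{\mr{PQ}}(\bs{\Lambda}\mb{u})+\mb{i}_{\mr{I}}(\bs{\Lambda}\mb{u})\right]$ splits into four explicit terms (wye/delta $\times$ PQ/I). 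This re-indexing, together with the treatment of $|\Omega_n|=2$ and of missing phases, is where almost all of the care goes; the rest is estimation.

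For the self-mapping property I would bound $\|\mb{T}(\mb{u})-\bs{\Lambda}^{-1}\mb{w}\|_\infty$ term by term. In the wye-PQ term, for each row $j$, $\bigl|\bigl[\bs{\Lambda}^{-1}\mb{Z}^Y(\diag(\mb{v}^Y)^{-1}\mb{s}^Y)^{*}\bigr]_j\bigr|\le\sum_k|[\bs{\Lambda}^{-1}\mb{Z}^Y]_{jk}|\,|s_L^k|/|v_k|$, and the sharper voltage bound replaces $|v_k|^{-1}$ by $|w_k|^{-1}(1-R\bar{\lambda}/\underline{w})^{-1}$, after which the remaining sum is the $j$-th absolute row sum of $\bs{\Lambda}^{-1}\mb{Z}^Y\diag(\mb{s}^Y)\diag(\mb{w}^Y)^{-1}$; maximizing over $j$ gives the first summand on the left of (C3). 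The delta-PQ term is identical with $\underline{\rho}$ and $2R$ in place of $\underline{w}$ and $R$, producing the second summand. For the constant-current terms the unit-modulus factors satisfy $\bigl|\sum_k A_{jk}d_k i_L^k\bigr|\le\sum_k|A_{jk}|\,|i_L^k|$, so they are dominated by $\|\bs{\Lambda}^{-1}\mb{Z}^Y\diag(\mb{i}^Y)\|_\infty$ and $\|\bs{\Lambda}^{-1}\mb{Z}^\Delta\diag(\mb{i}^\Delta)\|_\infty$, the third and fourth summands. Summing the four bounds by the triangle inequality and invoking (C3) yields $\|\mb{T}(\mb{u})-\bs{\Lambda}^{-1}\mb{w}\|_\infty\le R$, i.e.\ $\mb{T}(\mb{u})\in\mb{D}_R$.

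For the contraction property, take $\mb{u},\tilde{\mb{u}}\in\mb{D}_R$ and subtract the two matrix forms. The wye-PQ difference has $k$-th entry of modulus $|s_L^k|\,|v_k-\tilde v_k|/(|v_k|\,|\tilde v_k|)=|s_L^k|\,|\lambda_k|\,|u_k-\tilde u_k|/(|v_k|\,|\tilde v_k|)\le|s_L^k|\,|\lambda_k|\,\|\mb{u}-\tilde{\mb{u}}\|_\infty\,|w_k|^{-2}(1-R\bar{\lambda}/\underline{w})^{-2}$; summing against row $j$ of $\bs{\Lambda}^{-1}\mb{Z}^Y$ and maximizing over $j$ gives the first summand of (C4), the squared denominator and the factor $[\diag(\mb{w}^Y)^{-1}]^2$ coming from $|v_k\tilde v_k|^{-1}$ and the extra $\lambda_k$ (collected into $\bs{\Lambda}^Y$) from $|v_k-\tilde v_k|=|\lambda_k|\,|u_k-\tilde u_k|$. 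The constant-current difference is handled with the normalization inequality $\bigl|\tfrac{z}{|z|}-\tfrac{\tilde z}{|\tilde z|}\bigr|\le 2|z-\tilde z|/\max(|z|,|\tilde z|)$, which costs an extra factor $2$ but only a single (un-squared) denominator, yielding the third summand. The two delta contributions are the same computations applied to the line-to-line voltages $\mb{e}_k^T\mb{v}_n$: since $\mb{e}_k$ has two nonzero entries, $|\mb{e}_k^T(\mb{v}_n-\tilde{\mb{v}}_n)|\le 2\bigl(\max_{l\in\mc{J}_{\texttt{Node}[k]}}|\lambda_l|\bigr)\|\mb{u}-\tilde{\mb{u}}\|_\infty$, which is why the delta summands carry an additional factor $2$ (so the coefficients become $2$ for PQ and $4$ for I), why the per-node maxima $\bs{\Lambda}^\Delta$ appear, and why the denominators involve $1-2R\bar{\lambda}/\underline{\rho}$. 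Adding the four contributions and using (C4) gives $\|\mb{T}(\mb{u})-\mb{T}(\tilde{\mb{u}})\|_\infty\le\alpha\|\mb{u}-\tilde{\mb{u}}\|_\infty$ with $\alpha<1$. The main obstacle is the delta-load bookkeeping of the second step---getting the column differences, the per-node $\bs{\Lambda}^\Delta$, the factors of $2$, and the missing-phase case exactly right---so I would isolate the voltage bounds and the delta re-indexing as the lemmas of Appendix~\ref{sec:app:voltagebounds}, leaving the theorem's proof to the assembly of the term-by-term estimates above.
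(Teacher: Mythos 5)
Your proposal is correct and follows essentially the same route as the paper's proof: the same voltage lower bounds (yielding (C1)--(C2)), the same delta re-indexing into column differences $\mb{Z}_{\bullet k}-\mb{Z}_{\bullet k'}$ (the paper's Lemmas~\ref{lemma1}--\ref{lemma2}), the same four-term wye/delta $\times$ PQ/I decomposition for both the self-mapping and contraction estimates, and the same normalization inequality (Lemma~\ref{lemma3}) for the constant-current differences. The only differences are cosmetic---you bound $|v_k|^{-1}$ directly where the paper adds and subtracts $(s_L^k/w_k)^*$ before collapsing to the identical bound, and you use $\bar{\lambda}$ in place of $|\lambda_k|$ in the intermediate step---neither of which changes the resulting conditions.
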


If conditions \eqref{eqn:linetoneutralcondition}--\eqref{eqn:contractioncondition} hold for some $R >0$, then $\mb{T}(\mb{u})$ is a contraction mapping and has as a consequence a unique fixed point $\mb{u}^{\mr{fp}}$ in the ball defined by $\mb{D}_R$. In other words, the power flow equations have a unique solution in $\mb{D}_R$.
The value of $R$ can be interpreted as the infinity norm of the voltage difference from the  no-load power flow solution scaled by $\mb{\Lambda}$. The latter is obtained as  $\mb{u}= \bs{\Lambda}^{-1} \mb{w}$ upon setting  $\mb{i}_{\mr{PQ}}=\mb{i}_{\mr{I}}=\mb{0}$ in~\eqref{eqn:Tu}.  

Moreover, the contraction property of $\mb{T}(\mb{u})$ implies the following relationships for successive iterates of the Z-Bus method upon initialization with $\mb{u}[0] \in \mb{D}_R$:
\begin{subequations}
\begin{IEEEeqnarray}{rCl}
\| \mb{u}[t+1] - \mb{u}^{\mr{fp}} \|_{\infty} &\le& \alpha \| \mb{u}[t] - \mb{u}^{\mr{fp}} \|_{\infty}, \label{eqn:utufp} \\
\| \mb{u}[t+1] - \mb{u}[t]\|_{\infty} &\le& \alpha \| \mb{u}[t] - \mb{u}[t-1] \|_{\infty}. \label{eqn:numericalalpha}
\end{IEEEeqnarray}
\end{subequations}
Equation \eqref{eqn:utufp} implies that the iterates in \eqref{eqn:zbusu} converge to $\mb{u}^{\mr{fp}}$ with a  convergence rate that is upperbounded by $\alpha$.  Equation \eqref{eqn:numericalalpha} can be used to numerically evaluate the convergence rate by computing the ratio $\frac{ \|\mb{u}[t+1]-\mb{u}[t]\|_{\infty}}{\| \mb{u}[t]-\mb{u}[t-1]\|_{\infty}}$.  We will see in Section \ref{sec:numericaltests} for the IEEE test feeders  that  the aforementioned ratio turns out to be much smaller than the estimate provided by the left-hand side of \eqref{eqn:contractioncondition}.

Theorem \ref{thm:main} on $\mb{T}(\mb{u})$ implies certain convergence properties for iteration~\eqref{eqn:zbusv}, as stated next.
\begin{theorem}
\label{thm:second}
If conditions \eqref{eqn:linetoneutralcondition}--\eqref{eqn:contractioncondition} hold,  then by initializing $\mb{v}[0] \in \mb{D}'_R := \{ \mb{v}| \: \|\mb{\Lambda}^{-1} (\mb{v}-\mb{w})\|_{\infty} \le R \}$, the Z-Bus iterations in \eqref{eqn:zbusv} remain within $\mb{D}'_R$ and converge to a unique solution. Moreover, the convergence is R-linear with rate $\alpha$ [i.e., the sequence generated by~\eqref{eqn:zbusv} is dominated by the geometric sequence $B\alpha^t$ where $B$ is a non-negative scalar].
\end{theorem}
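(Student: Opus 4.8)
\emph{Proof strategy.} The plan is to transport the contraction result of Theorem~\ref{thm:main} from the $\mb{u}$-coordinates back to the original $\mb{v}$-coordinates through the bijection $\mb{u}=\bs{\Lambda}^{-1}\mb{v}$, exploiting the one-to-one correspondence between iterations~\eqref{eqn:zbusu} and~\eqref{eqn:zbusvT} already noted in Section~\ref{sec:conditions}. Since $\bs{\Lambda}$ is invertible, no information is lost, and both the self-mapping behavior of the orbit and the limit transfer verbatim; only the \emph{type} of convergence rate needs to be downgraded from Q-linear to R-linear, for a reason explained below.

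First I would observe that the linear map $\mb{v}\mapsto\bs{\Lambda}^{-1}\mb{v}$ carries $\mb{D}'_R$ onto $\mb{D}_R$: indeed $\|\bs{\Lambda}^{-1}(\mb{v}-\mb{w})\|_{\infty}=\|\bs{\Lambda}^{-1}\mb{v}-\bs{\Lambda}^{-1}\mb{w}\|_{\infty}$, so $\mb{v}\in\mb{D}'_R$ if and only if $\bs{\Lambda}^{-1}\mb{v}\in\mb{D}_R$, i.e., $\mb{D}'_R=\bs{\Lambda}\mb{D}_R$. Hence if $\mb{v}[0]\in\mb{D}'_R$ then $\mb{u}[0]:=\bs{\Lambda}^{-1}\mb{v}[0]\in\mb{D}_R$, and by the self-mapping property guaranteed by Theorem~\ref{thm:main} the whole orbit $\mb{u}[t]$ stays in $\mb{D}_R$; applying $\bs{\Lambda}$ and using $\mb{v}[t]=\bs{\Lambda}\mb{u}[t]$, which follows by induction from~\eqref{eqn:zbusvT}, gives $\mb{v}[t]\in\mb{D}'_R$ for all $t$. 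The Banach-type conclusion of Theorem~\ref{thm:main} then yields $\mb{u}[t]\to\mb{u}^{\mr{fp}}$, the unique fixed point of $\mb{T}$ in $\mb{D}_R$, so $\mb{v}[t]\to\mb{v}^{\mr{fp}}:=\bs{\Lambda}\mb{u}^{\mr{fp}}$, and $\mb{v}^{\mr{fp}}=\bs{\Lambda}\mb{T}(\bs{\Lambda}^{-1}\mb{v}^{\mr{fp}})$ is precisely the fixed-point form~\eqref{eqn:zbusv1}; uniqueness of $\mb{v}^{\mr{fp}}$ in $\mb{D}'_R$ transfers from uniqueness of $\mb{u}^{\mr{fp}}$ in $\mb{D}_R$ via the same bijection.

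For the rate, I would start from~\eqref{eqn:utufp}, which iterated gives $\|\mb{u}[t]-\mb{u}^{\mr{fp}}\|_{\infty}\le\alpha^{t}\|\mb{u}[0]-\mb{u}^{\mr{fp}}\|_{\infty}$. Left-multiplying by $\bs{\Lambda}$ and using $\|\bs{\Lambda}\mb{x}\|_{\infty}\le\|\bs{\Lambda}\|_{\infty}\|\mb{x}\|_{\infty}$ together with $\|\bs{\Lambda}\|_{\infty}=\bar{\lambda}$ (a diagonal matrix has infinity norm equal to the largest modulus of its diagonal entries) produces $\|\mb{v}[t]-\mb{v}^{\mr{fp}}\|_{\infty}\le B\alpha^{t}$ with $B:=\bar{\lambda}\,\|\mb{u}[0]-\mb{u}^{\mr{fp}}\|_{\infty}\ge0$, which is exactly R-linear convergence with rate $\alpha$ in the sense stated.

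The argument is essentially bookkeeping, so there is no serious obstacle; the one point that requires care is that the diagonal rescaling does \emph{not} preserve a Q-linear (per-step) contraction in the infinity norm — estimating $\|\mb{v}[t+1]-\mb{v}^{\mr{fp}}\|_{\infty}$ directly through $\mb{v}=\bs{\Lambda}\mb{u}$ picks up the factor $\|\bs{\Lambda}\|_{\infty}\|\bs{\Lambda}^{-1}\|_{\infty}$, which may exceed $1/\alpha$ — so the theorem must, and does, only claim the weaker R-linear notion that the error is dominated by a geometric sequence $B\alpha^{t}$. It is also worth double-checking that the limit $\mb{v}^{\mr{fp}}$ actually solves~\eqref{eqn:zbusv1} and not merely the scaled iteration, which is immediate once one writes out $\mb{v}^{\mr{fp}}=\bs{\Lambda}\mb{T}(\bs{\Lambda}^{-1}\mb{v}^{\mr{fp}})$ using the definition~\eqref{eqn:Tu} of $\mb{T}$.
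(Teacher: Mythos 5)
Your proposal is correct and follows essentially the same route as the paper: transfer the self-mapping, convergence, and uniqueness through the bijection $\mb{u}=\bs{\Lambda}^{-1}\mb{v}$, then convert the geometric decay of $\|\mb{u}[t]-\mb{u}^{\mr{fp}}\|_{\infty}$ into the bound $\|\mb{v}[t]-\mb{v}^{\mr{fp}}\|_{\infty}\le B\alpha^{t}$ using $\|\bs{\Lambda}\|_{\infty}=\bar{\lambda}$. The only cosmetic difference is that the paper additionally rewrites $B$ in terms of $\|\mb{v}[0]-\mb{v}^{\mr{fp}}\|_{\infty}$ via the factor $\bar{\lambda}/\min_{j}|\lambda_j|$, whereas you leave it as $\bar{\lambda}\|\mb{u}[0]-\mb{u}^{\mr{fp}}\|_{\infty}$; both are valid.
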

\begin{proof}
The self-mapping property of $\mb{T}(\mb{u})$ and the one-to-one correspondence between $\mb{v}$ and $\mb{u}$ guarantee that the iterates $\mb{v}[t]$ remain within $\mb{D}'_R$.  Regarding the convergence rate of \eqref{eqn:zbusv}, note that the contraction property of $\mb{T}(\mb{u})$ implies 
(cf.~\cite[Prop. 3.1.1]{BertsekasTsitsiklis1989})
\begin{IEEEeqnarray}{rCl}
\| \mb{u}[t]-\mb{u}^{\mr{fp}}\|_{\infty} \le \alpha^t \|\mb{u}[0]-\mb{u}^{\mr{fp}}\|_{\infty}. \label{eqn:ucontractionimplication}
\end{IEEEeqnarray}
Since we have $\mb{u}= \mb{\Lambda}^{-1} \mb{v}$, it follows from~\eqref{eqn:ucontractionimplication} that
\begin{IEEEeqnarray}{rCl}
\| \mb{\Lambda}^{-1} ( \mb{v}[t] - \mb{v}^{\mr{fp}} ) \|_{\infty} \le \alpha^t \| \mb{\Lambda}^{-1}(\mb{v}[0] - \mb{v}^{\mr{fp}} ) \|_{\infty}. \label{eqn:vcontraction1}
\end{IEEEeqnarray}
From the definition of the infinity norm it follows that
\begin{IEEEeqnarray}{lll}
&\| \mb{\Lambda}^{-1} ( \mb{v}[t] - \mb{v}^{\mr{fp}} ) \|_{\infty} &= \max_{j \in \mc{J}}  \left| \frac{1}{\lambda_j} (v_j[t] - v_j^{\mr{fp}} )\right|  \notag  \\
&&= \max_{j \in \mc{J}} \left\{ \left| \frac{1}{\lambda_j}\right| |v_j[t] - v_j^{\mr{fp}} |\right\} \notag  \\
&&\ge   \frac{1}{\bar{\lambda}} \max_{j \in \mc{J}}  |v_j[t] - v_j^{\mr{fp}} | \notag \\
\Rightarrow & \| \mb{\Lambda}^{-1} ( \mb{v}[t] - \mb{v}^{\mr{fp}} ) \|_{\infty}   &\ge  \frac{1}{\bar{\lambda}} \| \mb{v}[t] - \mb{v}^{\mr{fp}} \|_{\infty}. \IEEEeqnarraynumspace\label{eqn:vcontraction1upper}
\end{IEEEeqnarray}
 Moreover, we have that
\begin{IEEEeqnarray}{lll}
&\| \mb{\Lambda}^{-1} ( \mb{v}[0] - \mb{v}^{\mr{fp}} ) \|_{\infty} &= \max_{j \in \mc{J}}  \left| \frac{1}{\lambda_j} (v_j[0] - v_j^{\mr{fp}} )\right| \notag  \\
&&\le  \max_{j \in \mc{J}} \left| \frac{1}{\min_{k \in \mc{J}} | \lambda_k |} (v_j[0] - v_j^{\mr{fp}} )\right| \notag  \\
&&\le \frac{1}{\min_{k \in \mc{J}} | \lambda_k |}  \max_{j \in \mc{J}}   |v_j[0] - v_j^{\mr{fp}} | \notag \\
\Rightarrow &\| \mb{\Lambda}^{-1} ( \mb{v}[0] - \mb{v}^{\mr{fp}} ) \|_{\infty}  & \le  \frac{1}{\min_{k \in \mc{J}} | \lambda_k |}  \| \mb{v}[0] - \mb{v}^{\mr{fp}} \|_{\infty}. \label{eqn:vcontraction1lower}
\end{IEEEeqnarray}

Combining \eqref{eqn:vcontraction1upper}, \eqref{eqn:vcontraction1lower}, and \eqref{eqn:vcontraction1} yields
\begin{IEEEeqnarray}{rCl}
\|\mb{v}[t]-\mb{v}^{\mr{fp}}\|_{\infty} \le B \alpha^t, \label{eqn:vcontractionimplication}
\end{IEEEeqnarray}
where $B=\frac{\bar{\lambda}}{\min_{j \in \mc{J}} |\lambda_j|} \|\mb{v}[0]-\mb{v}^{\mr{fp}}\|_{\infty} \ge 0$.  Since the quantity $\| \mb{v}[t] - \mb{v}^{\mr{fp}}\|_{\infty}$ is upperbounded by a geometric sequence, the rate of convergence is (informally) said to be geometric; formally, the convergence is R-linear with rate $\alpha$ \cite[pp. 619]{NocedalWright2006}. 
\end{proof}

Conditions \eqref{eqn:linetoneutralcondition}--\eqref{eqn:contractioncondition} can be expressed via inequalities involving polynomials of degree at most four in the radius of the contraction region. The coefficients of the polynomials can be readily written in terms of the network parameters and loads. Quartic polynomials and their roots have been completely characterized, and can be routinely computed in terms of the polynomial coefficients---see for example \cite{Wolfram1}. The convergence region can thus be easily computed. Significant analytical advantages can also be afforded by the presented conditions, as the convergence region can potentially be expressed explicitly in terms of the network loads and parameters. An alternative set of nonlinear non-polynomial conditions for convergence of the Z-Bus method is presented in \cite{BazrafshanGatsis2016}. 

Conditions \eqref{eqn:linetoneutralcondition}--\eqref{eqn:contractioncondition} can also be graphically solved, as explained next.  First,  the quantities in \eqref{eqnGroup:feederQuantities} must be computed which depend only on the feeder data.   The set of $R$ values satisfying each condition can simply be plotted on the real line.  The intersection of the feasible regions in these plots gives a range for $R$.  
In the next section,  the valid range of $R$ satisfying conditions \eqref{eqn:linetoneutralcondition}--\eqref{eqn:contractioncondition} is computed for IEEE distribution test feeders, and the convergence of the Z-Bus method is numerically illustrated.

\section{Numerical verification on IEEE feeders}
\label{sec:numericaltests}
In this section,  the range of $R$ where conditions \eqref{eqn:linetoneutralcondition}--\eqref{eqn:contractioncondition} are satisfied is given for  IEEE-13, IEEE-37, and IEEE-123  distribution test feeders \cite{RadialFeeders2001}.    Furthermore,  the self-mapping property as well as the convergence of the Z-Bus method are numerically confirmed for iterations on voltages in \eqref{eqn:zbusv}.  This is because the Z-Bus method is routinely implemented on the variable $\mb{v}$ as opposed to $\mb{u}$. 

The feeder data are available online \cite{feederdata}.  We model transformer nodal admittances according to \cite{Chen1991pf}.  However, to avoid singularities in the bus admittance matrix,  delta-delta transformers are modeled using a modification suggested in \cite{Gorman1992} which entails connecting a small shunt admittance to the ground.   For voltage regulators, the models are derived from \cite[Ch. 7]{KerstingBook2001}.\footnote{The MATLAB scripts that compute the convergence regions for the  IEEE test feeders and run  the Z-Bus method are provided online at the following page: \url{https://github.com/hafezbazrafshan/contraction-mapping.}}

To numerically verify conditions \eqref{eqn:linetoneutralcondition}--\eqref{eqn:contractioncondition},  the design parameter $\mb{\Lambda}$ is selected to be $\mb{\Lambda}=\diag(\mb{w})$.   This  choice allows for the center of $\mb{D}_R$ to be the  vector of all one's. This practically means that the no-load solution for scaled voltages is $\mb{u}= \mb{\Lambda}^{-1}\mb{w}=\mb{1}$.

For the IEEE-13 test feeder, conditions \eqref{eqn:linetoneutralcondition}--\eqref{eqn:contractioncondition} yield the following inequalities for $R$:
\begin{subequations}
\label{eqnGroup:ieee13conditionsLAMBDAW}
\begin{IEEEeqnarray}{rCl}
\IEEEeqnarraymulticol{3}{l}{1-  1.043 R > 0\text{;} \quad 1- 1.203 R  > 0} \IEEEeqnarraynumspace \label{eqn:ieee13f1LAMBDAW}  \IEEEeqnarraynumspace \\
\IEEEeqnarraymulticol{3}{l}{\frac{0.127}{1- 1.043 R} +  \frac{  0.041}{1-1.203 R	} +  0.032+   + 0.01\le R} \label{eqn:ieee13f3LAMBDAW}  \IEEEeqnarraynumspace\\
\frac{0.127}{(1- 1.043 R)^2} &+& \frac{0.048}{(1-1.203 R)^2} \notag \\
&+&   \frac{0.064}{1- 1.043 R}+ \frac{0.024
}{1-1.203 R}  <1. \label{eqn:ieee13f4LAMBDAW} \IEEEeqnarraynumspace
\end{IEEEeqnarray}
\end{subequations}
For the IEEE-37 test feeder, the conditions are 
\begin{subequations}
\label{eqnGroup:ieee37conditionsLAMBDAW}
\begin{IEEEeqnarray}{rCl}
\IEEEeqnarraymulticol{3}{l}{1- 1.037R >0\text{;}  \quad 1-1.198 R >0} \label{eqn:ieee37f1LAMBDAW} \IEEEeqnarraynumspace \\
\IEEEeqnarraymulticol{3}{l}{ \frac{  0.037}{1-1.198 R} + 0.018 \le R} \label{eqn:ieee37f3LAMBDAW} \IEEEeqnarraynumspace\\
 \frac{0.043}{(1-1.198 R)^2} &+& \frac{0.042
}{1-1.198 R}  <1. \label{eqn:ieee37f4LAMBDAW} \IEEEeqnarraynumspace
\end{IEEEeqnarray}
\end{subequations}
Finally, for the IEEE-123 feeder, the conditions are
\begin{subequations}
\label{eqnGroup:ieee123conditionsLAMBDAW}
\begin{IEEEeqnarray}{rCl}
\IEEEeqnarraymulticol{3}{l}{1-  1.078R >0\text{;} \quad 1-   1.238 R >0} \label{eqn:ieee123f1LAMBDAW} \\
\IEEEeqnarraymulticol{3}{l}{\: \frac{ 0.129}{ 1 -1.078R} + \frac{ 0.001}{1-  1.238  R	} + 0.030   + 0.012  \le R} \label{eqn:ieee123f3LAMBDAW} \IEEEeqnarraynumspace \\
 \frac{0.129
}{(1 -1.0777R)^2} &+&\frac{0.001}{(1-1.238 R)^2} \notag \\
&+& \frac{0.060}{1-1.078 R}+\frac{0.027}{1-1.238R}  <1.\label{eqn:ieee123f4LAMBDAW} \IEEEeqnarraynumspace
\end{IEEEeqnarray}
\end{subequations}

	For each feeder, the subsets of the real line where conditions \eqref{eqn:linetoneutralcondition}--\eqref{eqn:contractioncondition} are satisfied are depicted in Fig.~\ref{fig:regions}, by solving \eqref{eqnGroup:ieee13conditionsLAMBDAW}, \eqref{eqnGroup:ieee37conditionsLAMBDAW}, and \eqref{eqnGroup:ieee123conditionsLAMBDAW}.  The intersection of these regions for the IEEE-13, IEEE-37 and IEEE-123 feeders is   $[R_{\min}, R_{\max}]=[0.29,0.48]$, $[R_{\min}, R_{\max}]=[0.06,0.64]$, and $[R_{\min}, R_{\max}]=[0.22,0.54]$, respectively.  Both  $R_{\min}$ and $R_{\max}$  reveal important information about the Z-Bus convergence.  Specifically, larger $R_{\max}$ guarantees convergence when the initialization is far from the unique solution.  Smaller $R_{\min}$ guarantees that the unique solution is close to the center of the ball.  The latter implies that the deviation of the load-flow voltage solution from the no-load solution is tightly bounded.  Notice that these regions are dependent on the design matrix $\mb{\Lambda} = \diag(\mb{w})$. For example,  setting $\mb{\Lambda}=\mb{I}_J$ for the IEEE-13 feeder yields $R_{\max}=0.60$ and also shifts the center of the ball from $\mb{1}$ to $\mb{w}$.  A study on  the impact of $\mb{\Lambda}$ on the contracting region remains for future work.
	
\begin{figure}[t]
\centering
\subfloat[]{\includegraphics[scale=0.5]{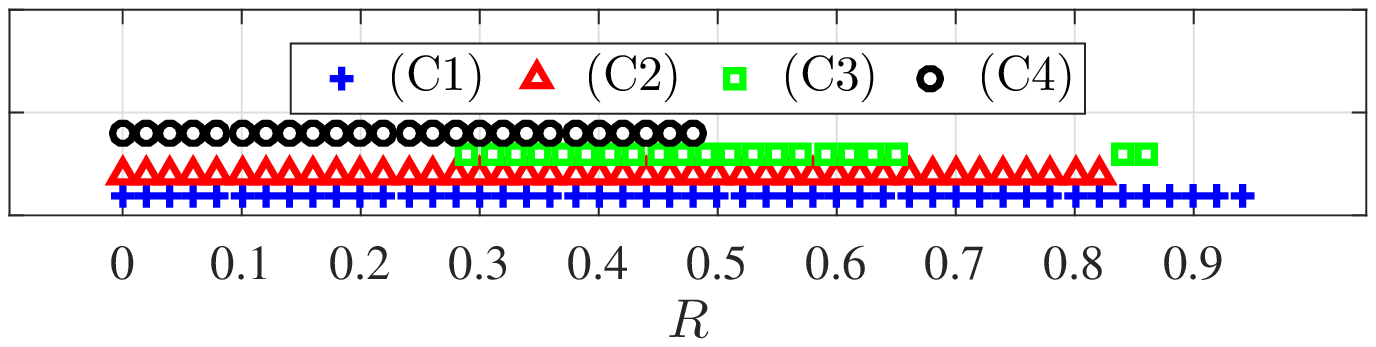}\label{fig:ieee13regions}}{} 
\vfill
\subfloat[]{\includegraphics[scale=0.5]{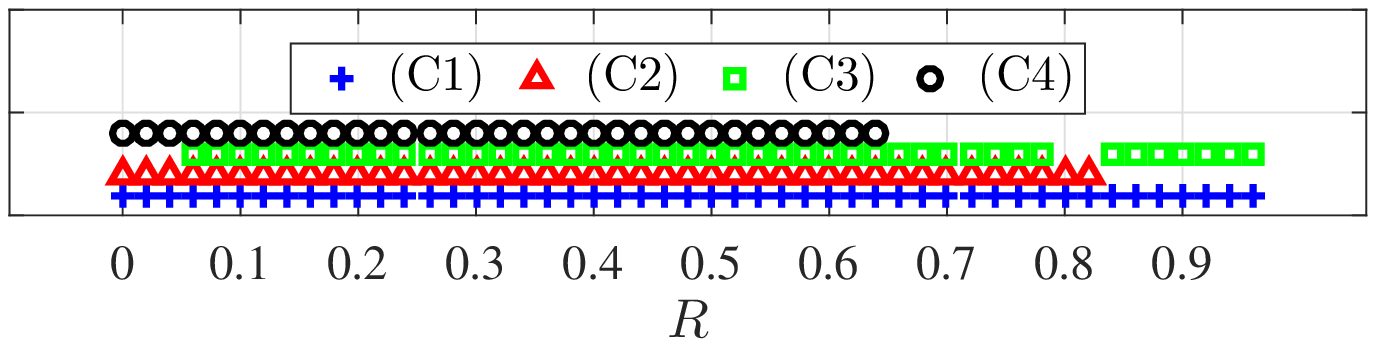}\label{fig:ieee37regions}}{}
\vfill
\subfloat[]{\includegraphics[scale=0.5]{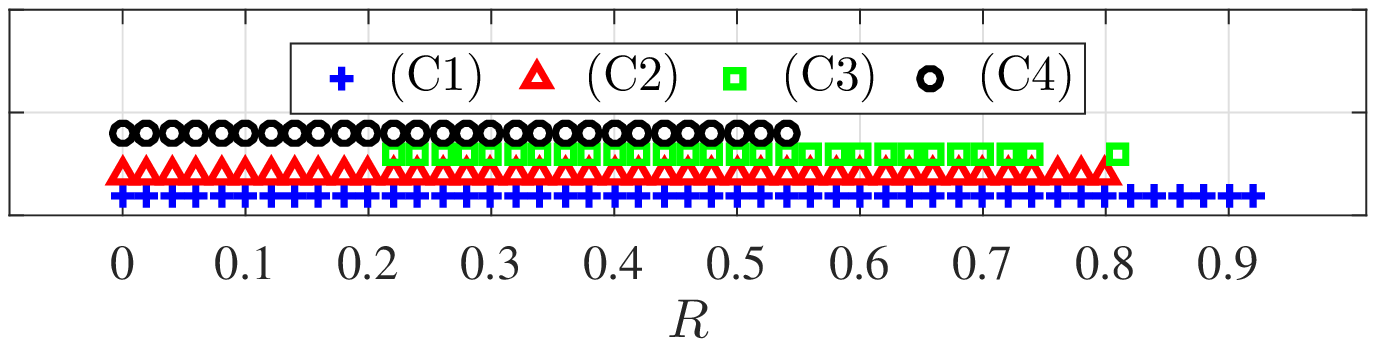}\label{fig:ieee123regions}}{}
\caption{\small Regions on the real line where conditions \eqref{eqn:linetoneutralcondition}--\eqref{eqn:contractioncondition} hold for  \protect\subref{fig:ieee13regions} IEEE-13, \protect\subref{fig:ieee37regions}  IEEE-37, and \protect\subref{fig:ieee123regions} IEEE-123 test feeders. The presence of a mark indicates that the condition is satisfied.}
\label{fig:regions}
\end{figure}

Figure~\ref{fig:contractionModulus} depicts the lower bound of the contraction modulus $\alpha$ evaluated from~\eqref{eqn:contractioncondition}  as a function of the feasible $R$  for the three IEEE feeders. It is revealed in  Fig.~\ref{fig:contractionModulus} that a more localized initialization (i.e., smaller $R$) yields smaller $\alpha$, with the smallest $\alpha$ occurring at $R_{\min}$. For the IEEE-13, IEEE-37, and IEEE-123 feeders, the smallest  $\alpha$ is respectively $0.50$, $0.09$, and $0.34$.  Upon initializing within $\mb{D}_{R_{\min}}$, the convergence rate is guaranteed to be at most as much as the aforementioned values.
\begin{figure}[t]
	\centering
\includegraphics[scale=0.45]{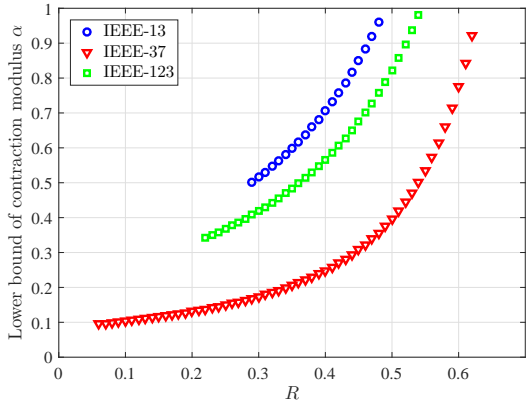}	
	\caption{  Lower bound of  contraction modulus $\alpha$ versus $R$ in the range for which conditions \eqref{eqn:linetoneutralcondition}--\eqref{eqn:contractioncondition} are satisfied.}
	\label{fig:contractionModulus}
\end{figure}

\begin{figure}[t]
	\centering
\includegraphics[scale=0.45]{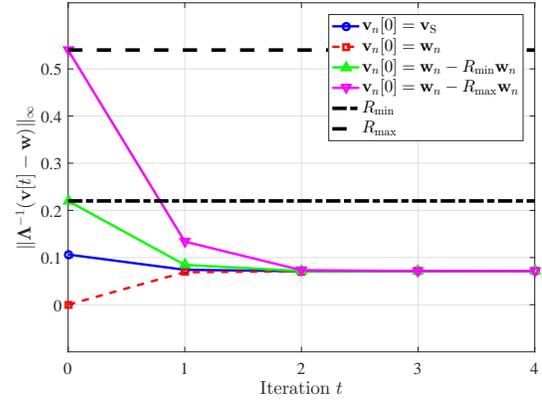}	
	\caption{ \small Demonstration of the self-mapping property of the Z-Bus iterations in \eqref{eqn:zbusv} for IEEE-123 test feeder.  As long as the Z-Bus method is initialized with $\mb{v}[0] \in \mb{D}'_R$, where $R$ satisfies conditions \eqref{eqn:linetoneutralcondition}--\eqref{eqn:contractioncondition}, successive iterations $\mb{v}[t]$ remain within $\mb{D}'_R$.}
	\label{fig:selfmapping}
\end{figure}

The self-mapping property of the Z-Bus iterations \eqref{eqn:zbusv} is demonstrated in Fig.~\ref{fig:selfmapping} for the IEEE-123 test feeder.  It is shown that when the initialization $\mb{v}[0]$ is within $\mb{D}'_R$, where $R \in [R_{\min}, R_{\max}]$, the sequence $\mb{v}[t]$ produced by  the Z-Bus method in \eqref{eqn:zbusvT} remains within $\mb{D}'_R$. In Fig.~\ref{fig:selfmapping}, a case of special interest is the graph corresponding to the initialization with the flat voltage profile, that is, when $\mb{v}_n[0]=\mb{v}_{\mr{S}}$, where $\mb{v}_{\mr{S}}=\{1,e^{-j2\frac{\pi}{3}},e^{j2\frac{\pi}{3}} \}$ is the voltage at the slack bus. The graph  shows that the typical initialization with the flat voltage profile  is within the contracting region; and upon initializing with $\mb{v}_n[0]=\mb{v}_{\mr{S}} \in \mb{D}_R'$ for all nodes, the iterates remain within $\mb{D}_R'$.

The distance between two successive iterates of the Z-Bus method for the IEEE-123 test feeder is plotted in Fig.~\ref{fig:zbusiterations}. It is shown that the distance $\| \mb{v}[t+1]  - \mb{v}[t]\|_{\infty}$ practically converges to zero within 4 iterations, implying that the fixed-point solution is obtained. For the sequences of Fig.~\ref{fig:zbusiterations}, the empirical convergence rate is also numerically calculated using \eqref{eqn:numericalalpha} and is depicted in Fig.~\ref{fig:IEEE123alpha}.  All the numerical convergence rates are smaller than $0.34$, which is the smallest theoretically obtained contraction modulus for the IEEE-123 network.
\begin{figure}[t]
\centering
\includegraphics[scale=0.45]{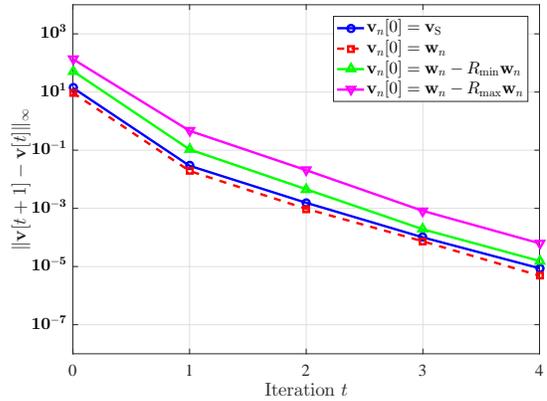} 	
\caption{\small Convergence of the Z-Bus iterations in \eqref{eqn:zbusv} for  IEEE-123 test feeder.  It is observed that  the Z-Bus method converges to the unique solution very fast---within 4 iterations.
	\label{fig:zbusiterations}}
\end{figure}

\begin{figure}[t]
\centering
\includegraphics[scale=0.45]{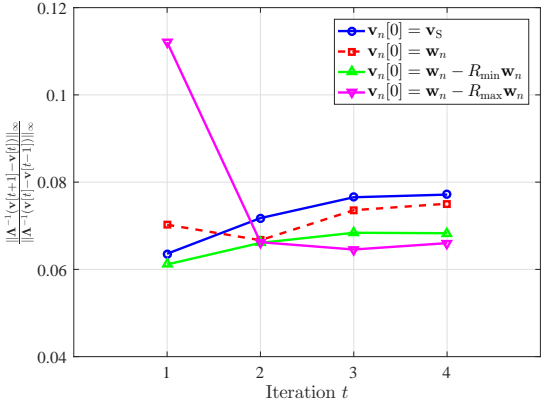} 	
\caption{ \small Numerical convergence rate of the Z-Bus iterations in \eqref{eqn:zbusv} for  the iterations in Fig.~\ref{fig:zbusiterations}.  The numerical convergence rate  at iteration $t$ is calculated using \eqref{eqn:numericalalpha}.  Notice that these convergence rates are all smaller than $0.34$, which is the tightest upper bound given in Fig.~\ref{fig:contractionModulus} for IEEE-123. 
	\label{fig:IEEE123alpha}}
\end{figure}

The merits of this work is that it characterizes certain regions with unique power flow solution in three-phase distribution networks and that it guarantees convergence of the Z-Bus method.  Working with networks that satisfy solvability conditions can provide a degree of assurance that the numerical algorithm---the Z-Bus method in this case---will converge to a solution. 

Further research is  required to obtain conditions that accurately characterize the largest region with a unique power flow solution.  As was previously alluded to, it is possible to leverage the design parameter $\mb{\Lambda}$ for this purpose. By setting $\mb{\Lambda}= \mr{diag}(\mb{w})$, the conditions are satisfied for the practical IEEE test networks; developing comprehensive methods for the selection of parameter $\mb{\Lambda}$ are left for future work.

It should be noted at this point that this work derives \emph{sufficient} conditions for convergence. Generally speaking, sufficient conditions are conservative by nature. That is, the existence of loads and network parameters that do not satisfy the conditions yet yield convergence is not precluded.  It is clear from conditions \eqref{eqn:selfmappingcondition} and \eqref{eqn:contractioncondition} that by increasing the constant-power and constant-current loads, there will eventually be no $R >0 $  that ensures contraction. 
	
As an example, in the IEEE-123 test feeder, after $50\%$ of load increase, no $R>0$ satisfies condition \eqref{eqn:selfmappingcondition}, however the Z-Bus converges to a solution within 5 iterations. Therefore, the ultimate goal would be to provide  a set of conditions that divide the voltage space to two regions, namely, one where there is convergence and another one of non-convergence. However, this is still an open research question. The next section provides two network examples where the Z-Bus method fails to converge in cases where conditions \eqref{eqn:linetoneutralcondition}--\eqref{eqn:contractioncondition} are not satisfied yet a solution may exist.

\section{Numerical examples of non-convergence}
\label{sec:non-convergence}
In this section, we provide two example networks where the Z-Bus method does not converge. The first example (Section \ref{sec:non-convergence:twonode}) depicts a simple two-node network for which an analytical solution is found; however, convergence conditions are not satisfied and the Z-Bus method oscillates.  The second example (Section \ref{sec:non-convergence:threenode}) features a non-trivial unbalanced network with constant-power loads where the convergence occurs only when the loads are reduced by a certain percentage. The convergence conditions also are satisfied for a close percentage, which reveals that they are relatively tight in this example. 
\subsection{Balanced two-node network with power injection}
\label{sec:non-convergence:twonode}
Consider a  two-node network where the slack bus $\mr{S}$ is connected  to a secondary bus through an ideal phase-decoupled three-phase line with real-valued series impedance $y_t$. The secondary bus contains real-valued ZIP components  $s_L$, $i_L$, and $y_L$ on each phase, where  $i_L, y_L >0$ but $s_L<0$, that is, the constant-power portion of the ZIP component is considered to be a power injection.  In this case, we have that $\mb{s}_L= s_L \mb{1}$, $\mb{i}_L=i_L \mb{1}$, $\mb{Y}_{\mr{L}}=y_L\mb{I}$ where $\mb{1} \in \mbb{R}^3$ is a vector of all one's and $\mb{I} \in \mbb{R}^{3\times 3}$ is the $3\times 3$ identity matrix.  Moreover, the network admittance matrix is $\mb{Y}=-\mb{Y}_{\mr{NS}}= y_t\mb{I}$.  Denote the voltage vector at the non-slack bus with $\mb{v}=\{v^a, v^b, v^c\}$ and let the slack bus voltage be fixed at $\mb{v}_{\mr{S}}=\{1, e^{-j\frac{2\pi}{3}}, e^{j\frac{2\pi}{3}}\}$. 

Due to the fact that the network is  balanced and the values of loads are real,  it turns out that a solution of the form $v^a=v^b e^{j\frac{2\pi}{3}}= v^c e^{-j\frac{2\pi}{3}}=v$ exists where $v$ is a real number. To demonstrate this, we write the Ohm's law for phase $a$ as follows:
\begin{IEEEeqnarray}{rCl}
	-\frac{s_L}{v}- i_L \frac{v}{|v|}= (y_t+y_L) v - y_t.  \label{eqn:twonodephaseavoltage}
	\end{IEEEeqnarray}

Assuming that we are only interested in non-negative voltages $v$, that is, when $|v|=v$, the following quadratic equation can be obtained for $v$: 
\begin{IEEEeqnarray}{rCl}
	(y_t+y_L) v^2 +(i_L-y_t)v +s_L=0.  \label{eqn:twonodequadratic}
\end{IEEEeqnarray}
The solution to \eqref{eqn:twonodequadratic} is explicitly given as 
\begin{IEEEeqnarray}{rCl}
	v= \frac{y_t - i_L}{2(y_t+y_L)} \pm  \frac{ \sqrt{\Delta}}{2 (y_t+y_L)}, \label{eqn:twonodegeneralsolution}
\end{IEEEeqnarray}
with $\Delta= (y_t-i_L)^2 - 4(y_t+y_L)s_L$.  Keep in mind that \eqref{eqn:twonodegeneralsolution} is not necessarily the only admissible voltage solution for this network since we have only limited the search to $v \in \mbb{R}_{\ge 0}$. 

In order to simplify the ensuing computations, let us select $y_t=y_L=\frac{1}{2}~\mr{pu}$, and set the current injection $i_L=\frac{1}{2}~\mr{pu}$. Substituting these values into \eqref{eqn:twonodegeneralsolution}, the real non-negative voltage solution $v$ parameterized  by $s_L$ is thus computed 
\begin{IEEEeqnarray}{rCl}
	v=  \sqrt{-s_L}. \label{eqn:twonodevsolution}
\end{IEEEeqnarray}
It is clear that for values $s_L < 0$, the network admits at least one  voltage solution $\mb{v}^{\mr{sol}}=\sqrt{-s_L}\mb{v}_{\mr{S}}$. 

At this point, it is our intention to show that even though   voltage solutions exist in this example network,  the Z-Bus method may not converge and the convergence conditions \eqref{eqn:linetoneutralcondition}--\eqref{eqn:contractioncondition} are not satisfied. 

We first compute $\mb{Z}= (\mb{Y}+\mb{Y}_{\mr{L}})^{-1} = \frac{1}{y_t+y_L} \mb{I}= \mb{I}$ and $\mb{w}=\frac{1}{2}\mb{v}_{\mr{S}}$. By selecting $\mb{\Lambda}=\mb{I}$, convergence conditions \eqref{eqn:linetoneutralcondition}--\eqref{eqn:contractioncondition} for this network yield the following inequalities parameterized by $s_L$:
\begin{subequations}
\label{eqngroup:twonodeconditions}
\begin{IEEEeqnarray}{rCl}
1 &-& 2R >0,  \label{eqn:twonodelinetoneutral} \\
\frac{2|s_L|}{1-2R} &+& \frac{1}{2} \le R, \label{eqn:twonodeselfmapping}\\
\frac{4 |s_L|}{(1-2R)^2} &+& \frac{2}{1-2R} <1, \label{eqn:twonodecontraction}
\end{IEEEeqnarray}
\end{subequations}
where in \eqref{eqngroup:twonodeconditions},   due to the absence of delta-connected nodes, \eqref{eqn:linetolinecondition} is not present.

Inequality \eqref{eqn:twonodeselfmapping} can be transformed to the quadratic inequality $2R^2-2R+\frac{1}{2}+2|s_L| \le 0$, which does not have a feasible solution in $R$ for any $s_L<0$. However,  existence of power solutions for $s_L<0$ is easily perceived by \eqref{eqn:twonodevsolution}.  Figure \ref{fig:twoNodeFigure} plots the infinity norm of the iterates of the Z-Bus method, that is, $\|\mb{v}[t]\|_{\infty}$,  when $s_L=-0.5~\mr{pu}$. Figure \ref{fig:twoNodeFigure} is representative of the fact that the Z-Bus method does not converge to the solution \eqref{eqn:twonodevsolution}.  
\begin{figure}[t]
	\centering
	\includegraphics[scale=0.45]{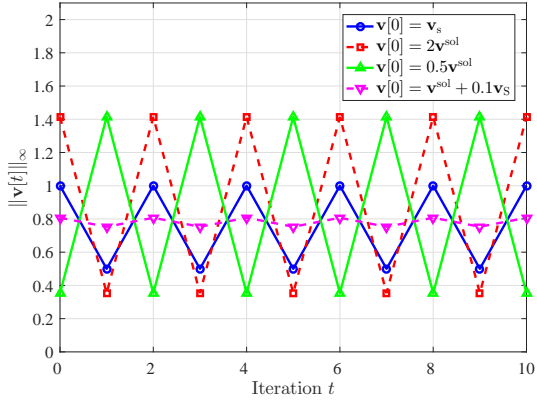} 	
	\caption{ \small Iterations of the Z-Bus method performed on the two-node test network while setting $s_L=-0.5~\mr{pu}$.  Since Z-Bus convergence conditions in \eqref{eqngroup:twonodeconditions} are not feasible for any $s_L$, the convergence of the Z-Bus method is not guaranteed, even when there is knowledge that  solution \eqref{eqn:twonodevsolution} exists.} 
		\label{fig:twoNodeFigure}
\end{figure}

\subsection{Unbalanced three-node network}
\label{sec:non-convergence:threenode}
Consider a three-node network with the set of nodes $\mc{N}=\{1,2,\mr{S}\}$ and the set of edges $\{(1,\mr{S}), (1,2)\}$. The series admittances for edges $(1,\mr{S})$ and $(1,2)$  are as follows: 
\begin{subequations}
	\label{eqngroup:threenodeseriesadmittance}
\begin{IEEEeqnarray}{rCl}
	\mb{Y}_{1\mr{S}}= \bmat{0.077 - j5.33 &  0.01 - j0.09  & 0.02 - j0.08\\
		0.01 - j0.09 &  0.087 - j8  & 0.03 - j0.07\\
		0.02 - j0.08  & 0.03 - j0.07 &  0.07 - j1.5}, \IEEEeqnarraynumspace \\
	\mb{Y}_{12}= \bmat{  0.056 - j8.66 &  0 & 0.02 - j0.07\\
	0 &   0.02 - j4.8 &  0.03 - j0.05\\
	0.02- j0.07 &  0.03 - j0.05  & 0.03 - j3.8}, \IEEEeqnarraynumspace
\end{IEEEeqnarray}
\end{subequations}
where all values are per unit. 
The load buses $\{1,2\}$ only contain constant-power loads with per unit values of  $s_{L_1}=\theta \bmat{0.7+j1.5 & 0.8+j1.5& 0.8+j2.5}^T$ and $s_{L_2}=\theta \bmat{0.6+j2.5 & 0.6+j0.5 & 0.3+j0.5}^T$ where $\theta \in (0,1]$ is a parameter that is going to be varied to obtain convergence threshold for the Z-Bus method.  

Our intention is to find $\theta$ such that the convergence of the Z-Bus method is guaranteed and numerically investigate how conservative the theoretical bound is.  First,  network parameters are computed: 
\begin{subequations}
	\label{eqngroup:threenodenetworkParams}
\begin{IEEEeqnarray}{rCl}
\mb{Y}&=&\bmat{\mb{Y}_{1\mr{S}}+\mb{Y}_{12} & -\mb{Y}_{12} \\ -\mb{Y}_{12} & \mb{Y}_{12} } \label{eqn:threenodeY} \\
\mb{Y}_{\mr{NS}}&=& \bmat{-\mb{Y}_{1\mr{S}} \\ \mb{O} }, \quad
\mb{w}= \bmat{ \mb{v}_{\mr{S}} \\ \mb{v}_{\mr{S}}}. \label{eqn:threenodeYNSw}
\end{IEEEeqnarray}
\end{subequations}

Next, to find $\theta$ such that Z-Bus convergence is guaranteed, by setting $\mb{\Lambda}=\mb{I}$, the convergence conditions \eqref{eqn:linetoneutralcondition}--\eqref{eqn:contractioncondition} yield the following inequalities in $R$ parameterized by $\theta$:
\begin{subequations}
\label{eqngroup:threenodeconditions}
\begin{IEEEeqnarray}{rCl}
1-R>0 &\Rightarrow& R<1 \label{eqn:threenodec1} \\
\frac{2.359 \theta}{1-R} \le R &\Rightarrow&  R^2-R+2.359 \theta \le 0 \label{eqn:threenodec3}  \\
\frac{2.359 \theta}{(1-R)^2} < 1 &\Rightarrow& R^2- 2R+1-2.359\theta >0. \label{eqn:threenodec4}
\end{IEEEeqnarray}
\end{subequations}
It is not hard to see that \eqref{eqngroup:threenodeconditions}  has a valid solution for $R$ only if $\theta \le \theta^{\mr{sol}}= 0.1060$. The range of $R$ for convergent Z-Bus is then given as $[R_{\min}, R_{\max}]=[\frac{1-\sqrt{1- 9.4360 \theta}}{2},\frac{1+\sqrt{1- 9.4360 \theta}}{2} ]$.  
The Z-Bus method is run on this example network for several representative values of $\theta$ using the initialization $\mb{v}[0]=\mb{w}$.  The corresponding Z-Bus iterations are shown  in Fig.~\ref{fig:threeNodeFigure}. It is revealed in Fig.~\ref{fig:threeNodeFigure}, that for almost all percentages $\theta$ higher than $\theta^{\mr{sol}}$ the Z-Bus method fails to converge. However, for $\theta \le \theta^{\mr{sol}}$, the Z-Bus iterations are guaranteed to converge to the unique solution in $\mb{D}_R$ where $R \in [\frac{1-\sqrt{1- 9.4360 \theta}}{2},\frac{1+\sqrt{1- 9.4360 \theta}}{2} ]  $.

\begin{figure}[t]
	\centering
	\includegraphics[scale=0.45]{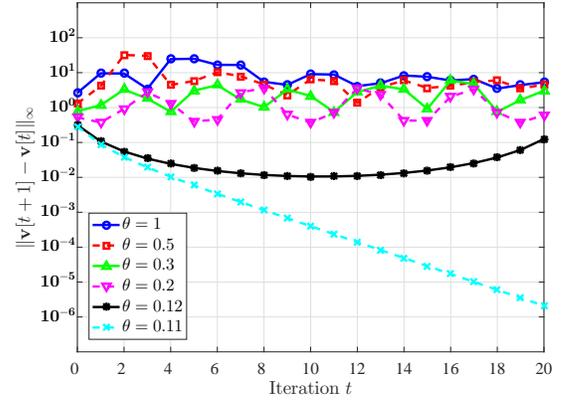} 	
	\caption{ \small Iterations of the Z-Bus method performed on the three-node test network for several values of parameter $\theta$.   When the value of $\theta$ is such that Z-Bus convergence conditions \eqref{eqngroup:threenodeconditions} are not feasible, the convergence of the Z-Bus is not guaranteed. The black line corresponding to $\theta=0.12$ depicts a very long oscillation.  As $\theta$ approaches $\theta^{\mr{sol}}=0.1060$, the Z-Bus method converges. When $\theta \le \theta^{\mr{sol}}$, a unique solution exists within $\mb{D}_R$ where $R \in [\frac{1-\sqrt{1- 9.4360 \theta}}{2},\frac{1+\sqrt{1- 9.4360 \theta}}{2} ]  $.
		\label{fig:threeNodeFigure}}
\end{figure}

\section{Concluding Remarks}
\label{sec:conclusion}
A set of sufficient conditions was derived that guarantee the convergence of the Z-Bus method to a unique solution of the power flow problem in three-phase distribution networks with ZIP loads. It was numerically demonstrated that the IEEE test feeders with wye and delta ZIP loads satisfy the derived conditions. Characterizing the largest regions with unique power flow solution is an open research question. Investigating the effect of the design parameter $\mb{\Lambda}$ to this end is the subject of future work.

\appendices
\section{Useful Intermediate Results}
\label{sec:app:voltagebounds}
This Appendix proves intermediate results on the product $\mb{Z}[\mb{i}_{\mr{PQ}} + \mb{i}_{\mr{I}}]$ that shows up in \eqref{eqn:zbusv}, and an inequality on complex numbers, which will be later used. 

\subsection{Rearranging $\sum\nolimits_{k \in \mc{J}_\Delta} \mb{Z}_{jk} \mb{i}_{\mr{PQ}}(\mb{v})_k$ } 
\label{subsec:lemma1}
\begin{lemma}
\label{lemma1}
Consider the matrix $\mb{Z}^{\Delta}$ defined in \eqref{eqn:zdelta}. 
For  $n \in \mc{N}_{\Delta}$, $\phi \in \Omega_n$,  $k =\texttt{Lin}(n,\phi) \in \mc{J}_{\Delta}$, $j \in \mc{J}$,  and indices $l_k \in \{1, \ldots, |\mc{J}_\Delta|\}$, the following equality holds:
\begin{IEEEeqnarray}{rCl}
\sum\limits_{k \in \mc{J}_\Delta} \mb{Z}_{jk} \mb{i}_{\mr{PQ}}(\mb{v})_k = -\sum\limits_{k \in \mc{J}_\Delta} \mb{Z}_{jl_k}^{\Delta} \left(\frac{s_L^k}{\mb{e}_k^T \mb{v}_n}\right)^*.    \label{eqn:lemma1}
\end{IEEEeqnarray}
\end{lemma}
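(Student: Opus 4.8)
The plan is to expand the left-hand side of~\eqref{eqn:lemma1} using the delta constant-power model~\eqref{eqn:deltaloadPQ}, regroup the resulting sum over ordered phase pairs into a sum over unordered pairs, and then match each term against a column of $\mb{Z}^\Delta$ through the correspondence $\phi\mapsto\{\phi,r(\phi)\}$. Writing $\mc{J}_\Delta=\bigcup_{n\in\mc{N}_\Delta}\mc{J}_n$ and substituting~\eqref{eqn:deltaloadPQ} gives
\[
\sum_{k\in\mc{J}_\Delta}\mb{Z}_{jk}\,\mb{i}_{\mr{PQ}}(\mb{v})_k = -\sum_{n\in\mc{N}_\Delta}\sum_{\phi\in\Omega_n}\sum_{\phi'\in\Omega_n\setminus\{\phi\}}\mb{Z}_{j,\texttt{Lin}(n,\phi)}\left(\frac{s_{L_n}^{\phi\phi'}}{(\mb{e}_n^{\phi\phi'})^T\mb{v}_n}\right)^{*}.
\]

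Next I would fix a node $n\in\mc{N}_\Delta$ and, inside the inner double sum, pair the ordered term for $(\phi,\phi')$ with the one for $(\phi',\phi)$. Using the symmetry $s_{L_n}^{\phi\phi'}=s_{L_n}^{\phi'\phi}$ together with $(\mb{e}_n^{\phi'\phi})^T\mb{v}_n=-(\mb{e}_n^{\phi\phi'})^T\mb{v}_n$, the two terms combine to $\bigl(\mb{Z}_{j,\texttt{Lin}(n,\phi)}-\mb{Z}_{j,\texttt{Lin}(n,\phi')}\bigr)\bigl(s_{L_n}^{\phi\phi'}/(\mb{e}_n^{\phi\phi'})^T\mb{v}_n\bigr)^{*}$, so the inner double sum collapses to a single sum of such difference terms over the unordered pairs $\{\phi,\phi'\}\subseteq\Omega_n$.

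It then remains to index these unordered pairs by $k\in\mc{J}_n$. When $|\Omega_n|=3$, the map $\phi\mapsto\{\phi,r(\phi)\}$ is a bijection from $\Omega_n$ onto the three unordered pairs, and the definitions of $s_L^k$, $\mb{e}_k$, and $\mb{Z}_{\bullet l_k}^\Delta$ make the term associated with $\{\phi,r(\phi)\}$ equal to $-\mb{Z}_{jl_k}^\Delta\bigl(s_L^k/\mb{e}_k^T\mb{v}_n\bigr)^{*}$ with $k=\texttt{Lin}(n,\phi)$ and $n=\texttt{Node}[k]$. When $|\Omega_n|=2$, say $\Omega_n=\{\phi,\phi'\}$ with $r(\phi)=\phi'$, there is a single unordered pair but two indices in $\mc{J}_n$: the index $k=\texttt{Lin}(n,\phi)$ carries $s_L^k=s_{L_n}^{\phi\phi'}$ and $\mb{Z}_{\bullet l_k}^\Delta=\mb{Z}_{\bullet k}-\mb{Z}_{\bullet,\texttt{Lin}(n,\phi')}$, which exactly reproduces the pair's contribution, whereas the index $k'=\texttt{Lin}(n,\phi')$ has $r(\phi')\notin\Omega_n$, so $s_L^{k'}=0$ and $\mb{Z}_{\bullet l_{k'}}^\Delta=\mb{0}_J$, and it contributes nothing on either side. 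Summing the node contributions over $n\in\mc{N}_\Delta$ yields~\eqref{eqn:lemma1}. The main obstacle is purely the bookkeeping in this last step: one must check that $\phi\mapsto\{\phi,r(\phi)\}$ is genuinely a bijection for three-phase delta nodes so that no unordered pair is omitted or double-counted, and that in the missing-phase case the degenerate index $k'$ drops out of the right-hand side---which it does because both $\mb{Z}_{\bullet l_{k'}}^\Delta=\mb{0}_J$ and $s_L^{k'}=0$---while the non-degenerate index $k$ supplies the lone line-to-line term.
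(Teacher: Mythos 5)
Your proposal is correct and follows essentially the same route as the paper's proof: expand via \eqref{eqn:deltaloadPQ}, combine the ordered terms $(\phi,\phi')$ and $(\phi',\phi)$ using $s_{L_n}^{\phi\phi'}=s_{L_n}^{\phi'\phi}$ and $(\mb{e}_n^{\phi'\phi})^T\mb{v}_n=-(\mb{e}_n^{\phi\phi'})^T\mb{v}_n$, and then match the resulting differences $\mb{Z}_{j\texttt{Lin}(n,\phi)}-\mb{Z}_{j\texttt{Lin}(n,r(\phi))}$ to the columns of $\mb{Z}^\Delta$, with the degenerate two-phase index dropping out because both $\mb{Z}_{\bullet l_{k'}}^\Delta=\mb{0}_J$ and $s_L^{k'}=0$. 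The paper simply writes out the two- and three-phase cases term by term where you phrase the same bookkeeping as a bijection $\phi\mapsto\{\phi,r(\phi)\}$; the content is identical.
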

\begin{proof}
Using \eqref{eqn:deltaloadPQ} for constant-PQ loads with delta connections, we can write
\begin{IEEEeqnarray}{rCl}
\IEEEeqnarraymulticol{3}{l}{\sum\limits_{k \in \mc{J}_\Delta} \mb{Z}_{jk} \mb{i}_{\mr{PQ}}(\mb{v})_k  =} \notag \\
&& -\sum\limits_{n \in \mc{N}_\Delta} \left(\sum\limits_{\phi \in \Omega_n} \mb{Z}_{j\texttt{Lin}(n,\phi)} \sum\limits_{\phi' \in \Omega_n / \{\phi\}} \frac{s_{L_n}^{\phi \phi^{'*}}}{(\mb{e}_n^{\phi \phi'})^T \mb{v}_n^*}\right). \label{eqn:ideltaPQfirstParanthesis} \IEEEeqnarraynumspace
\end{IEEEeqnarray}
Consider the term in parenthesis in \eqref{eqn:ideltaPQfirstParanthesis}.  Since $n \in \mc{N}_\Delta$, then $|\Omega_n| \ge 2$, i.e., node $n$ has at least two available phases.  
If $|\Omega_n|=2$, then without loss of generality we can assume $\Omega_n=\{\phi, \phi'\}$ where $\phi'=r(\phi)$ and $r(\phi') \notin \Omega_n$.  The term in parenthesis in \eqref{eqn:ideltaPQfirstParanthesis} can be written as follows:

\begin{IEEEeqnarray}{rCl}
\IEEEeqnarraymulticol{3}{l}{\mb{Z}_{j \texttt{Lin}(n,\phi)} \frac{ s_{L_n}^{\phi \phi^{'*}}}{ (\mb{e}_n^{\phi\phi'})^T \mb{v}_n^*} + \mb{Z}_{j \texttt{Lin}(n,\phi')} \frac{ s_{L_n}^{\phi' \phi^*}}{ (\mb{e}_n^{\phi'\phi})^T \mb{v}_n^*} } \notag\\
&=&\mb{Z}_{j\texttt{Lin}(n,\phi)}\frac{ s_{L_n}^{\phi r(\phi)^*}}{ (\mb{e}_n^{\phi r(\phi)})^T \mb{v}_n^*} - \mb{Z}_{j \texttt{Lin}(n,r(\phi))} \frac{ s_{L_n}^ {\phi r(\phi)^*}}{ (\mb{e}_n^{\phi r(\phi)})^T \mb{v}_n^*}  \notag \\
&=&\left[\mb{Z}_{jk} - \mb{Z}_{j k'}\right]\frac{ s_L^{k^*}}{ \mb{e}_k^T \mb{v}_n^*} = \mb{Z}_{jl_k}^\Delta \frac{ s_L^{k^*}}{ \mb{e}_k^T \mb{v}_n^{*}} \stackrel{\text{(a)}}= \sum\limits_{k \in \mc{J}_n} \mb{Z}_{jl_k}^{\Delta} \frac{ s_L^{k^*}}{ \mb{e}_k^T \mb{v}_n^*},  \IEEEeqnarraynumspace \label{eqn:ideltaPQParanthesisTwoPhases}
\end{IEEEeqnarray}
where $k'=\texttt{Lin}(n,r(\phi))$, and equality (a) results from the fact that $\mb{Z}_{jl_k}^{\Delta}$ is zero when $\phi \in \Omega_n$ and $r(\phi) \notin \Omega_n$ [cf. \eqref{eqn:zdelta}]. 

If node $n$ has three phases then $\Omega_n=\{a,b,c\}$ and $r(a)=b$, $r(b)=c$, and $r(c)=a$.  The term in parenthesis in \eqref{eqn:ideltaPQfirstParanthesis} equals:
\begin{IEEEeqnarray}{rCl}
&& \mb{Z}_{j \texttt{Lin}(n,a)} \frac{ s_{L_n}^{ab^*}}{ (\mb{e}_n^{ab})^T \mb{v}_n^*} + \mb{Z}_{j \texttt{Lin}(n,a)} \frac{ s_{L_n}^{ac^*}}{ (\mb{e}_n^{ac})^T \mb{v}_n^*} \notag \\
&& + \:  \mb{Z}_{j \texttt{Lin}(n,b)} \frac{ s_{L_n}^{ba^*}}{ (\mb{e}_n^{ba})^T \mb{v}_n^*} + \mb{Z}_{j \texttt{Lin}(n,b)} \frac{ s_{L_n}^{bc^*}}{ (\mb{e}_n^{bc})^T \mb{v}_n^*} \notag \\
 && + \:\mb{Z}_{j \texttt{Lin}(n,c)} \frac{s_{L_n}^{ca^*}} { (\mb{e}_n^{ca})^T \mb{v}_n^*} +  \mb{Z}_{j \texttt{Lin}(n,c)} \frac{s_{L_n}^{cb^*}} { (\mb{e}_n^{cb})^T \mb{v}_n^*} \notag \\
 &=&\mb{Z}_{j \texttt{Lin}(n,a)} \frac{ s_{L_n}^{ab^*}}{ (\mb{e}_n^{ab})^T \mb{v}_n^*} - \:  \mb{Z}_{j \texttt{Lin}(n,b)} \frac{ s_{L_n}^{ab^*}}{ (\mb{e}_n^{ba})^T \mb{v}_n^*}  \notag \\
 &&  + \:  \mb{Z}_{j \texttt{Lin}(n,b)} \frac{ s_{L_n}^{b c^*}}{ (\mb{e}_n^{bc})^T \mb{v}_n^*} - \mb{Z}_{j \texttt{Lin}(n,c)} \frac{s_{L_n}^{bc^*}} { (\mb{e}_n^{bc})^T \mb{v}_n^*}  \notag \\
 && + \: \mb{Z}_{j \texttt{Lin}(n,c)} \frac{s_{L_n}^{ca^*}} { (\mb{e}_n^{ca})^T \mb{v}_n^*}  - \mb{Z}_{j \texttt{Lin}(n,a)} \frac{ s_{L_n}^{ca^*}}{ (\mb{e}_n^{ca})^T \mb{v}_n^*} \notag  \\
&=& \sum\limits_{\phi \in \Omega_n} \left[\mb{Z}_{j \texttt{Lin}(n,\phi)}- \mb{Z}_{j \texttt{Lin}(n,r(\phi))}\right] \frac{s_{L_n}^{\phi r(\phi)^*}} {(\mb{e}_n^{\phi r(\phi)})^T \mb{v}_n^*} \notag \\
&=& \sum\limits_{k \in \mc{J}_n} \left[\mb{Z}_{jk} - \mb{Z}_{jk'}\right]  \frac{s_L^{k^*}}{\mb{e}_k^T \mb{v}_n^*} = \sum\limits_{k \in \mc{J}_n} \mb{Z}_{jl_k}^{\Delta} \frac{s_L^{k^*}}{\mb{e}_k^T \mb{v}_n^*}. \IEEEeqnarraynumspace \label{eqn:ideltaPQParanthesisThreePhase} 
\end{IEEEeqnarray}
For $|\Omega_n|=2$ or $|\Omega_n|=3$, substituting the appropriate term from \eqref{eqn:ideltaPQParanthesisTwoPhases} or \eqref{eqn:ideltaPQParanthesisThreePhase} for the parenthesis in \eqref{eqn:ideltaPQfirstParanthesis} yields \eqref{eqn:lemma1}. 
\end{proof}
\subsection{Rearranging $\sum\nolimits_{k \in \mc{J}_{\Delta}} \mb{Z}_{jk} \mb{i}_{\mr{I}} (\mb{v})_k$}

\begin{lemma}
\label{lemma2}
Consider the matrix $\mb{Z}^{\Delta}$ defined in \eqref{eqn:zdelta}. 
For  $n \in \mc{N}_{\Delta}$, $\phi \in \Omega_n$,  $k =\texttt{Lin}(n,\phi) \in \mc{J}_{\Delta}$, $j \in \mc{J}$,  and indices $l_k \in \{1, \ldots, |\mc{J}_\Delta|\}$, the following equality holds:
\begin{IEEEeqnarray}{rCl}
\sum\limits_{k \in \mc{J}_\Delta} \mb{Z}_{jk} \mb{i}_{\mr{I}}(\mb{v})_k =  \sum\limits_{k \in \mc{J}_n} \mb{Z}_{jl_k}^{\Delta} \frac{i_L^k \mb{e}_k^T \mb{v}_n}{|\mb{e}_k^T \mb{v}_n|}.    \label{eqn:lemma2}
\end{IEEEeqnarray}
\end{lemma}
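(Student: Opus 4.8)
The plan is to follow the proof of Lemma~\ref{lemma1} essentially verbatim, substituting the constant-current delta-load expression \eqref{eqn:deltaloadI} for the constant-power expression \eqref{eqn:deltaloadPQ}. First I would plug \eqref{eqn:deltaloadI} into the left-hand side of \eqref{eqn:lemma2}, factor out the overall minus sign, and regroup the sum over $\mc{J}_\Delta$ as a sum over nodes $n \in \mc{N}_\Delta$ of an inner double sum, exactly as in \eqref{eqn:ideltaPQfirstParanthesis}; this reduces the claim to showing that, for each fixed $n \in \mc{N}_\Delta$ and each $j$, the inner expression $\sum_{\phi \in \Omega_n} \mb{Z}_{j\texttt{Lin}(n,\phi)} \sum_{\phi' \in \Omega_n \setminus \{\phi\}} i_{L_n}^{\phi\phi'} (\mb{e}_n^{\phi\phi'})^T\mb{v}_n / |(\mb{e}_n^{\phi\phi'})^T\mb{v}_n|$ collapses to $\sum_{k \in \mc{J}_n} \mb{Z}^\Delta_{jl_k}\, i_L^k (\mb{e}_k^T\mb{v}_n)/|\mb{e}_k^T\mb{v}_n|$.

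I would then split into the two cases $|\Omega_n| = 2$ and $|\Omega_n| = 3$, mirroring \eqref{eqn:ideltaPQParanthesisTwoPhases} and \eqref{eqn:ideltaPQParanthesisThreePhase}. In the two-phase case, writing $\Omega_n = \{\phi, r(\phi)\}$ with $r(r(\phi)) \notin \Omega_n$, the inner sum has only the two terms indexed by $\phi$ and $r(\phi)$; I would combine them using the load-model symmetry $i_{L_n}^{\phi r(\phi)} = i_{L_n}^{r(\phi)\phi}$ together with $(\mb{e}_n^{r(\phi)\phi})^T\mb{v}_n = -(\mb{e}_n^{\phi r(\phi)})^T\mb{v}_n$ and $|(\mb{e}_n^{r(\phi)\phi})^T\mb{v}_n| = |(\mb{e}_n^{\phi r(\phi)})^T\mb{v}_n|$, which produces the factor $\mb{Z}_{jk} - \mb{Z}_{jk'} = \mb{Z}^\Delta_{jl_k}$ with $k = \texttt{Lin}(n,\phi)$, $k' = \texttt{Lin}(n,r(\phi))$; the ``missing-phase'' summand indexed by $k'$ is then appended for free, since $\mb{Z}^\Delta_{\bullet l_{k'}} = \mb{0}_J$ by \eqref{eqn:zdelta}, which upgrades the single term to the sum over $k \in \mc{J}_n$. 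In the three-phase case I would list the six cross terms, pair them into the three groups sharing the edge vectors $\mb{e}_n^{ab}$, $\mb{e}_n^{bc}$, $\mb{e}_n^{ca}$, and apply the same two identities to each pair; the result again collapses to $\sum_{k \in \mc{J}_n}\mb{Z}^\Delta_{jl_k}\, i_L^k (\mb{e}_k^T\mb{v}_n)/|\mb{e}_k^T\mb{v}_n|$. Summing over $n \in \mc{N}_\Delta$ and restoring the overall sign then gives \eqref{eqn:lemma2}.

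Compared with Lemma~\ref{lemma1}, the simplification is that no complex conjugation appears: in the constant-power case the reflection $\phi \leftrightarrow \phi'$ acted on $1/\big((\mb{e}_n^{\phi\phi'})^T\mb{v}_n\big)^*$, whereas here it acts on the already magnitude-normalized quantity $(\mb{e}_n^{\phi\phi'})^T\mb{v}_n/|(\mb{e}_n^{\phi\phi'})^T\mb{v}_n|$, so the sign change comes purely from flipping $\mb{e}_n^{\phi\phi'}$ in the numerator while the magnitude in the denominator is left invariant. Accordingly I do not expect any genuine difficulty; the only thing to watch is the combinatorial bookkeeping in the three-phase case---pairing the six terms so that each pair shares one common edge vector $\mb{e}_k$ and one common coefficient $i_L^k$---and making sure that the identification $\mb{e}_k = \mb{e}_n^{\phi r(\phi)}$ (respectively $\mb{e}_k = \mb{e}_n^{\phi\phi'}$ in the degenerate two-phase subcase) is used consistently with the ordering index $l_k$, so that the zero columns of $\mb{Z}^\Delta$ correctly annihilate the contributions of phases whose right shift is unavailable.
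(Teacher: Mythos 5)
Your proposal is correct and is essentially the paper's own proof: the paper's argument for Lemma~\ref{lemma2} is literally ``follow the proof of Lemma~\ref{lemma1} with $s_{L_n}^{\phi\phi'*}/((\mb{e}_n^{\phi\phi'})^T\mb{v}_n^*)$ replaced by $i_{L_n}^{\phi\phi'}(\mb{e}_n^{\phi\phi'})^T\mb{v}_n/|(\mb{e}_n^{\phi\phi'})^T\mb{v}_n|$,'' which is exactly what you spell out, with the pairwise cancellation now driven by the antisymmetry of the numerator $(\mb{e}_n^{\phi'\phi})^T\mb{v}_n=-(\mb{e}_n^{\phi\phi'})^T\mb{v}_n$ (the magnitude in the denominator being invariant) instead of the conjugated denominator. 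One small caveat: carrying the leading minus of \eqref{eqn:deltaloadI} through the computation actually yields $-\sum_{k\in\mc{J}_\Delta}\mb{Z}^\Delta_{jl_k}\,i_L^k\,\mb{e}_k^T\mb{v}_n/|\mb{e}_k^T\mb{v}_n|$, i.e.\ the printed right-hand side of \eqref{eqn:lemma2} is missing the minus sign that appears in \eqref{eqn:lemma1} (and writes $\mc{J}_n$ where $\mc{J}_\Delta$ is meant) --- immaterial downstream since the lemma is only invoked inside absolute values, but your ``restoring the overall sign'' step should land on the negated version.
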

\begin{proof}
Follow Appendix \ref{subsec:lemma1} with the terms $\frac{s_{L_n}^{\phi \phi^{'*}}}{(\mb{e}_n^{\phi \phi'})^T \mb{v}_n^*}$ and $\frac{s_L^{k^*}}{\mb{e}_k^T \mb{v}_n^*} $ replaced by $\frac{i_{L_n}^{\phi \phi'} (\mb{e}_n^{\phi \phi'})^T \mb{v}_n}{|(\mb{e}_n^{\phi \phi'})^T \mb{v}_n|}$ and  $\frac{ i_L^k  \mb{e}_k^T \mb{v}_n}{ |\mb{e}_k^T \mb{v}_n|} $, respectively.
 \end{proof}

\subsection{A useful inequality for complex numbers}
\begin{lemma}
\label{lemma3}
For two complex numbers $x,y \neq 0$ we have that
\begin{IEEEeqnarray}{rCl}
\left|\frac{x}{|x|} - \frac{y}{|y|}\right| \le  2 \frac{ |x -y|}{|x|}.
\end{IEEEeqnarray}
\end{lemma}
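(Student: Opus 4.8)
The plan is to reduce everything to the ordinary triangle inequality and the reverse triangle inequality $\bigl||x|-|y|\bigr| \le |x-y|$ by an add-and-subtract manipulation. The quantity $\frac{x}{|x|}-\frac{y}{|y|}$ mixes a ``numerator difference'' $x-y$ and a ``denominator difference'' $|x|-|y|$, so the first step is to separate these two effects.

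Concretely, I would write
\begin{IEEEeqnarray*}{rCl}
\frac{x}{|x|} - \frac{y}{|y|} = \frac{x-y}{|x|} + y\left(\frac{1}{|x|} - \frac{1}{|y|}\right),
\end{IEEEeqnarray*}
which is just adding and subtracting $\frac{y}{|x|}$. Applying the triangle inequality gives
\begin{IEEEeqnarray*}{rCl}
\left|\frac{x}{|x|} - \frac{y}{|y|}\right| \le \frac{|x-y|}{|x|} + |y|\cdot\frac{\bigl||y|-|x|\bigr|}{|x|\,|y|} = \frac{|x-y|}{|x|} + \frac{\bigl||x|-|y|\bigr|}{|x|}.
\end{IEEEeqnarray*}
Note that the factor $|y|$ cancels cleanly, so the nonvanishing of $y$ is used only to make the initial division legitimate, and the bound does not degrade as $|y|\to 0$ or $|y|\to\infty$.

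Finally, I would invoke the reverse triangle inequality $\bigl||x|-|y|\bigr| \le |x-y|$ to bound the second term by $\frac{|x-y|}{|x|}$, yielding
\begin{IEEEeqnarray*}{rCl}
\left|\frac{x}{|x|} - \frac{y}{|y|}\right| \le \frac{|x-y|}{|x|} + \frac{|x-y|}{|x|} = 2\,\frac{|x-y|}{|x|},
\end{IEEEeqnarray*}
which is the claim. There is no real obstacle here; the only thing to get right is choosing the correct term ($\frac{y}{|x|}$, so that the denominator that survives is $|x|$ rather than $|y|$) in the add-and-subtract step, since swapping the roles of $x$ and $y$ there would produce the asymmetric bound with $|y|$ in the denominator instead.
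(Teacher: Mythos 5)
Your proposal is correct and follows essentially the same route as the paper: both add and subtract $\frac{y}{|x|}$, apply the triangle inequality, and then bound $\bigl||x|-|y|\bigr|$ by $|x-y|$ via the reverse triangle inequality. No gaps.
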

\begin{proof} The following hold:
\begin{IEEEeqnarray}{rCl} 
\left|\frac{x}{|x|} - \frac{y}{|y|}\right| &\le & \left|\frac{x}{|x|} - \frac{y}{|x|}\right| + \left|\frac{y}{|x|} - \frac{y}{|y|}\right| \notag \\
&=& \frac{ |x -y|}{|x|} + |y| \frac{\big| |y|- |x| \big|}{|x||y|} \le 2 \frac{ |x -y|}{|x|}.  \label{eqn:lemma3proof}
\end{IEEEeqnarray}
Notice that \eqref{eqn:lemma3proof} holds since $| |y|- |x|| \le |x-y|$.
\end{proof}
\section{Proof of Theorem \ref{thm:main}}
\label{sec:appendixProof}
In this section, the proof of Theorem \ref{thm:main} is given.  First, to guarantee that for all $\mb{u} \in \mb{D}_R$  the corresponding line-to-neutral and line-to-line voltage magnitudes are positive,  conditions  \eqref{eqn:linetoneutralcondition} and \eqref{eqn:linetolinecondition} are derived in Appendices \ref{subsec:linetoneutralbounds} and \ref{subsec:linetolinebounds} respectively.   Condition \eqref{eqn:selfmappingcondition} ensures the self-mapping property of $\mb{T}(\mb{u})$ and is derived in Appendix \ref{subsec:selfmapping}.   Appendix  \ref{subsec:contraction} derives condition \eqref{eqn:contractioncondition},  which guarantees the contraction property of $\mb{T}(\mb{u})$.
\subsection{ Bounds on line to neutral voltages}
\label{subsec:linetoneutralbounds}
This section proves two results. First, a lower bound on all line to neutral voltages is derived in \eqref{eqn:linetoneutralvoltagesLowerBound} which will be later  used in proving the self-mapping and contraction properties.  Second,  condition \eqref{eqn:linetoneutralcondition} is proved.   

Due to condition $\mb{u} \in \mb{D}_R$, the following holds for $k \in \mc{J}$:
\begin{IEEEeqnarray}{rCl}
&& \max_{k \in \mc{J}} | u_k- \frac{w_k}{\lambda_k} | \le  R  \Rightarrow |u_k- \frac{w_k}{\lambda_k}| \le R,   \notag  \\
&& \Rightarrow | \lambda_ku_k - w_k| \le |\lambda_k| R \Rightarrow |v_k - w_k| \le R|\lambda_k|, \notag \\
&& \Rightarrow |w_k| - |v_k| \le R |\lambda_k|  \Rightarrow  |w_k| - R|\lambda_k| \le |v_k| \label{eqn:linetoneutralvoltagesLowerBound1} \\
&&\Rightarrow |w_k|( 1- R|\lambda_k|/|w_k|) \le |v_k|. \label{eqn:linetoneutralvoltagesLowerBound}  \IEEEeqnarraynumspace
\end{IEEEeqnarray} 
Equation \eqref{eqn:linetoneutralvoltagesLowerBound} provides a lower bound for the magnitude of line to neutral voltages in terms of $R$ and $\mb{w}$.   

The proof of \eqref{eqn:linetoneutralcondition} is based on \eqref{eqn:linetoneutralvoltagesLowerBound}. 	  In \eqref{eqn:linetoneutralvoltagesLowerBound},  in the term in parenthesis, we replace $|w_k|$ by its minimum $\underline{w}$ [defined in \eqref{eqnGroup:knownminimums}], and replace $|\lambda_k|$ by its maximum $\bar{\lambda}$ [also defined in \eqref{eqnGroup:knownminimums}]  to obtain a lower bound for the left-hand side of \eqref{eqn:linetoneutralvoltagesLowerBound} which is common for all $k \in \mc{J}$:
\begin{IEEEeqnarray}{rCl}
|w_k|(1- R |\lambda_k|/|w_k|) \ge |w_k| ( 1 - R\bar{\lambda}/\underline{w}). \IEEEeqnarraynumspace \label{eqn:linetoneutralvoltagesLowestBound} 
\end{IEEEeqnarray}
To ensure that for all $\mb{u} \in \mb{D}_R$,  the magnitude of the line to neutral voltages is positive, the right-hand side of \eqref{eqn:linetoneutralvoltagesLowestBound} must be greater than zero, yielding condition \eqref{eqn:linetoneutralcondition}. 
\subsection{Bounds on line to line voltages} 
\label{subsec:linetolinebounds}
Similar to Appendix \ref{subsec:linetoneutralbounds}, this section proves two results. First,  we find a lower bound for line to line voltages in delta connections.  Next we derive condition \eqref{eqn:linetolinecondition}. 
  
For $n \in \mc{N}_{\Delta}$, any line to line voltage can be written as $\mb{e}_k^T \mb{v}_n$, where $k=\texttt{Lin}(n,\phi)$ and $\phi \in \Omega_n$.  Using the triangle inequality, we find the following:
\begin{IEEEeqnarray}{rCl}
 |\mb{e}_k^T \mb{v}_n| &=& |\mb{e}_k^T(\mb{v}_n-\mb{w}_n)+ \mb{e}_k^T \mb{w}_n|  \notag \\
&\ge& |\mb{e}_k^T \mb{w}_n| - |\mb{e}_k^T(\mb{v}_n-\mb{w}_n)| \label{eqn:evw}
\end{IEEEeqnarray}
Due to H\"older's inequality, for $n=\texttt{Node}[k]$ we have that 
\begin{IEEEeqnarray}{rCl}
|\mb{e}_k^T(\mb{v}_n-\mb{w}_n)| &\le& \|\mb{e}_k\|_1 \| \mb{v}_n- \mb{w}_n\|_{\infty},  \notag  \\
\Rightarrow |\mb{e}_k^T(\mb{v}_n-\mb{w}_n)|  & \stackrel{\text{(a)}}\le&  2 \max_{l \in \mc{J}_n} \{ |v_l - w_l|\} \notag \\
\Rightarrow  |\mb{e}_k^T(\mb{v}_n-\mb{w}_n)| &\le& 2 \max_{l \in \mc{J}_n} \{|\lambda_l| |u_l - \frac{w_l}{\lambda_l}| \}  \notag \\
\Rightarrow  |\mb{e}_k^T(\mb{v}_n-\mb{w}_n)| &\stackrel{\text{(b)}}\le& 2R \max_{l \in \mc{J}_n} \{|\lambda_l| \},
 \label{eqn:holdersuse}
\end{IEEEeqnarray}
where (a) holds because $\|\mb{e}_k\|_{1}=2$ for $k \in \mc{J}$,  and (b) holds due to the assumption $\mb{u} \in \mb{D}_R$.  Using \eqref{eqn:holdersuse} in \eqref{eqn:evw} we find:
\begin{IEEEeqnarray}{rCl}
 |\mb{e}_k^T \mb{v}_n| &\ge& |\mb{e}_k^T\mb{w}_n| - 2R \max_{l \in \mc{J}_n} \{|\lambda_l| \} \label{eqn:linetolinevoltagesLowerBound1} \\
& \ge &|\mb{e}_k^T\mb{w}_n| (1 - 2 R \max_{l \in \mc{J}_n} \{ |\lambda_l| \} / |\mb{e}_k^T\mb{w}_n|). \label{eqn:linetolinevoltagesLowerBound}
\end{IEEEeqnarray}
 Equation \eqref{eqn:linetolinevoltagesLowerBound} provides a lower bound for the magnitude of line to line voltages in terms of $R$ and $\mb{w}$. 

The proof of \eqref{eqn:linetolinecondition} is based on \eqref{eqn:linetolinevoltagesLowerBound}.  In \eqref{eqn:linetolinevoltagesLowerBound},  in the term in parenthesis, we replace the term $|\mb{e}_k^T\mb{w}_n|$ by its minimum $\underline{\rho}$ [given  in \eqref{eqnGroup:knownminimums}], and replace $|\lambda_k|$ by its maximum $\bar{\lambda}$ [also given in \eqref{eqnGroup:knownminimums}] to obtain a lower bound for the line to line voltages  which is common for all $k \in \mc{J}_{\Delta}$ and $n=\texttt{Node}[k]$:
\begin{IEEEeqnarray}{rCl}
\IEEEeqnarraymulticol{3}{l}{ |\mb{e}_k^T\mb{w}_n| (1 - 2 R \max_{l \in \mc{J}_n} \{ |\lambda_l| \} / |\mb{e}_k^T\mb{w}_n|)}  \notag \\
 && \ge  |\mb{e}_k^T\mb{w}_n| (1 - 2R \bar{\lambda} / \underline{\rho}).  \label{eqn:linetolinevoltagesLowestBound}\IEEEeqnarraynumspace
\end{IEEEeqnarray} 
To guarantee that for all $\mb{u} \in \mb{D}_R$, the magnitude of line-to-line voltages is positive,  \eqref{eqn:linetolinevoltagesLowestBound} must be greater than zero, yielding condition \eqref{eqn:linetolinecondition}.

\subsection{Self-mapping property}
\label{subsec:selfmapping}
 To obtain a sufficient condition for the self-mapping property, it is assumed that $\mb{u} \in \mb{D}_R$, and an upper bound for the term $\|\mb{T}(\mb{u}) - \mb{\Lambda}^{-1} \mb{w} \|_{\infty}$ is sought.  The $j$-th entry of  $\|\mb{T}(\mb{u}) - \mb{\Lambda}^{-1} \mb{w} \|_{\infty} $ can be upper-bounded as follows:
\begin{IEEEeqnarray}{rCl}
| \mb{T}(\mb{u})_j - \frac{w_j}{\lambda_j}| &=& \left|\frac{1}{\lambda_j} \mb{Z}_{j\bullet} \left[ \mb{i}_{\mr{PQ}}(\mb{v})+ \mb{i}_{\mr{I}}(\mb{v})\right] + \frac{w_j}{\lambda_j}-\frac{w_j}{\lambda_j}\right| \notag \\
\IEEEeqnarraymulticol{3}{l}{\le \frac{1}{|\lambda_j|} \left(\left|\sum\limits_{k \in \mc{J}_Y} \mb{Z}_{jk} \mb{i}_{\mr{PQ}}(\mb{v})_k\right| + \left| \sum\limits_{k \in \mc{J}_\Delta } \mb{Z}_{jk} \mb{i}_{\mr{PQ}}(\mb{v})_k \right| \right.} \notag \\
\IEEEeqnarraymulticol{3}{l}{\: \: + \: \left.  \left| \sum\limits_{k \in \mc{J}_Y} \mb{Z}_{jk} \mb{i}_\mr{I} (\mb{v}) \right| + \left| \sum\limits_{k \in \mc{J}_\Delta} \mb{Z}_{jk} \mb{i}_{\mr{I}} (\mb{v}) \right| \right).}
 \label{eqn:transformball1} \IEEEeqnarraynumspace  
\end{IEEEeqnarray}

The right-hand side of \eqref{eqn:transformball1} is the sum of four terms.  An upper bound for each term is provided next. 

For the first term in \eqref{eqn:transformball1}, the following holds: 
\begin{IEEEeqnarray}{rCl}
\left| \sum\limits_{k \in \mc{J}_Y} \mb{Z}_{jk} \mb{i}_{\mr{PQ}} (\mb{v})_k \right| &=& \left| \sum \limits_{ k \in \mc{J}_Y} \mb{Z}_{jk} ( \frac{s_{L}^k}{v_k})^* \right| \notag \IEEEeqnarraynumspace \\
\IEEEeqnarraymulticol{3}{l}{= \left| \sum\limits_{k \in \mc{J}_Y} \mb{Z}_{jk} \left[ \left(\frac{s_L^k}{v_k}\right)^* - \left(\frac{s_L^k}{w_k}\right)^*+  \left(\frac{s_L^k}{w_k}\right)^* \right] \right|} \notag \\
\IEEEeqnarraymulticol{3}{l} { \le  \sum\limits_{k \in \mc{J}_Y} |\mb{Z}_{jk}| \frac{ |s_L^k| |w_k-v_k|}{|v_k| |w_k|}  + \sum \limits_{k \in \mc{J}_Y} |\mb{Z}_{jk}|  \frac{ |s_{L}^k|}{|w_k|}} \notag \\
\IEEEeqnarraymulticol{3}{l} { \stackrel{\text{(a)}}{\le}  \sum\limits_{k \in \mc{J}_Y} |\mb{Z}_{jk}| \frac{ |s_L^k| R |\lambda_k|}{\left(|w_k| - R |\lambda_k|\right) |w_k|}  + \sum \limits_{k \in \mc{J}_Y} |\mb{Z}_{jk}|  \frac{ |s_{L}^k|}{|w_k|}} \notag \\
\IEEEeqnarraymulticol{3}{l} { =  \sum\limits_{k \in \mc{J}_Y} \frac{|\mb{Z}_{jk}||s_L^k|}{|w_k| - R |\lambda_k|}  \stackrel{\text{(b)}}{\le} \sum\limits_{k \in \mc{J}_Y} \frac{|\mb{Z}_{jk}||s_L^k|}{|w_k| (1- R \bar{\lambda} / \underline{w})},  }  \label{eqn:sYUpperBound}
\end{IEEEeqnarray}
where  (a) is due to \eqref{eqn:linetoneutralvoltagesLowerBound1}, and  (b) is due to \eqref{eqn:linetoneutralvoltagesLowerBound} and \eqref{eqn:linetoneutralvoltagesLowestBound}. 

For the second term in \eqref{eqn:transformball1}, we use Lemma \ref{lemma1}, and  we write
\begin{IEEEeqnarray}{rCl}
\left| \sum\limits_{k \in \mc{J}_\Delta} \mb{Z}_{jk} \mb{i}_{\mr{PQ}} (\mb{v})_k \right| &=& \left| \sum \limits_{ k \in \mc{J}_\Delta} \mb{Z}_{jl_k}^{\Delta} \left( \frac{s_{L}^k}{\mb{e}_k^T\mb{v}_n}\right)^* \right| \notag \IEEEeqnarraynumspace \\
\IEEEeqnarraymulticol{3}{l}{= \left| \sum\limits_{k \in \mc{J}_\Delta} \mb{Z}_{jl_k}^{\Delta} \left[ \left(\frac{s_L^k}{\mb{e}_k^T\mb{v}_n}\right)^* - \left(\frac{s_L^k}{\mb{e}_k^T\mb{w}_n}\right)^*+  \left(\frac{s_L^k}{\mb{e}_k^T\mb{v}_n}\right)^* \right] \right|} \notag \\
\IEEEeqnarraymulticol{3}{l} { \le  \sum\limits_{k \in \mc{J}_\Delta} |\mb{Z}_{jl_k}^{\Delta}| \frac{ |s_L^k| |\mb{e}_k^T(\mb{w}_n-\mb{v}_n)|}{|\mb{e}_k^T\mb{v}_n||\mb{e}_k^T\mb{w}_n|}  +  \sum \limits_{k \in \mc{J}_\Delta} |\mb{Z}_{jl_k}^{\Delta}|  \left|\frac{ s_{L}^k}{\mb{e}_k^T\mb{w}_n}\right|} \notag \\
\IEEEeqnarraymulticol{3}{l} { \stackrel{\text{(a)}}{\le}  \sum\limits_{k \in \mc{J}_\Delta} |\mb{Z}_{jl_k}^{\Delta}| \frac{ |s_L^k| 2R  \max\limits_{l \in \mc{J}_n} \{ |\lambda_l|\}}{(|\mb{e}_k^T\mb{w}_n| - 2R  \max\limits_{l \in \mc{J}_n} \{ |\lambda_l|\}) |\mb{e}_k^T\mb{w}_n|} }  \notag \\
\quad + \sum \limits_{k \in \mc{J}_\Delta} |\mb{Z}_{jl_k}^{\Delta}|  \left|\frac{ s_{L}^k}{\mb{e}_k^T\mb{w}_n}\right|\notag \\
\IEEEeqnarraymulticol{3}{l} { \le  \sum\limits_{k \in \mc{J}_\Delta} \frac{|\mb{Z}_{jl_k}^{\Delta}| |s_L^k|}{|\mb{e}_k^T\mb{w}_n|} \left[ \frac{2R  \max_{l \in \mc{J}_n} \{ |\lambda_l|\}}{|\mb{e}_k^T\mb{w}_n| - 2R  \max_{l \in \mc{J}_n} \{ |\lambda_l|\} } +1 \right] }  \notag \\
\IEEEeqnarraymulticol{3}{l} { \le  \sum\limits_{k \in \mc{J}_\Delta} \frac{|\mb{Z}_{jl_k}^{\Delta}| |s_L^k|}{|\mb{e}_k^T\mb{w}_n| (1  - 2R  \max_{l \in \mc{J}_n} \{ |\lambda_l|\}/|\mb{e}_k^T \mb{w}_n|)}}\notag \\
\IEEEeqnarraymulticol{3}{l} { \stackrel{\text{(b)}}{\le}  \sum\limits_{k \in \mc{J}_\Delta} \frac{|\mb{Z}_{jl_k}^{\Delta}| |s_L^k|}{|\mb{e}_k^T\mb{w}_n| (1  - 2R  \bar{\lambda}/\underline{\rho})},} \label{eqn:sDeltaUpperBound}
\end{IEEEeqnarray}
where $n=\texttt{Node}[k]$, index $l_k$ is defined in \eqref{eqn:zdelta},  inequality (a) is due to \eqref{eqn:holdersuse} and \eqref{eqn:linetolinevoltagesLowerBound1}, and inequality (b) is due to \eqref{eqn:linetolinevoltagesLowestBound}. 

The  upper bound for the third term in \eqref{eqn:transformball1}, is given by
\begin{IEEEeqnarray}{rCl}
\left| \sum\limits_{k \in \mc{J}_Y} \mb{i}_{\mr{I}}(\mb{v})\right| = \left| \sum\limits_{k \in \mc{J}_Y} \mb{Z}_{jk} \frac{v_k}{|v_k|} i_L^k \right| \le  \sum\limits_{k \in \mc{J}_Y} |\mb{Z}_{jk}| |i_L^k|. \IEEEeqnarraynumspace \label{eqn:iYUpperBound}
\end{IEEEeqnarray}

  Lemma \ref{lemma2} is invoked to provide an upper bound for  the fourth term in  \eqref{eqn:transformball1}:
\begin{IEEEeqnarray}{rCl}
\left|\sum\limits_{k \in \mc{J}_{\Delta}} \mb{Z}_{jk} \mb{i}_{\mr{I}} (\mb{v})_k \right| &=& \left|\sum\limits_{k \in \mc{J}_{\Delta}} \mb{Z}_{jl_k}^{\Delta} \frac{\mb{e}_k^T \mb{v}_n}{|\mb{e}_k^T \mb{v}_n|} i_{L}^k \right| \notag \\
\Rightarrow \left|\sum\limits_{k \in \mc{J}_{\Delta}} \mb{Z}_{jk} \mb{i}_{\mr{I}} (\mb{v})_k \right| &\le& \sum\limits_{k \in \mc{J}_{\Delta}} |\mb{Z}_{jl_k}^{\Delta}| |i_{L}^k|. \IEEEeqnarraynumspace \label{eqn:iDeltaUpperBound}
\end{IEEEeqnarray}
Using equations \eqref{eqn:sYUpperBound}, \eqref{eqn:sDeltaUpperBound}, \eqref{eqn:iYUpperBound}, and \eqref{eqn:iDeltaUpperBound} in \eqref{eqn:transformball1} yields
\begin{IEEEeqnarray}{rCl}
| \mb{T}(\mb{u})_j - \frac{w_j}{\lambda_j}|  &\le &  \frac{1}{|\lambda_j|} \left[  \sum\limits_{k \in \mc{J}_Y} \frac{|\mb{Z}_{jk}||s_L^k|}{|w_k| (1- R \bar{\lambda} / \underline{w})} \right.  \notag \\
  \IEEEeqnarraymulticol{3}{l}{\left.  + \:  \sum\limits_{k \in \mc{J}_\Delta} \frac{|\mb{Z}_{jl_k}^{\Delta}| |s_L^k|}{|\mb{e}_k^T\mb{w}_{\texttt{Node}[k]}| (1  - 2R  \bar{\lambda}/\underline{\rho})} \right.} \notag \\
 \IEEEeqnarraymulticol{3}{l}{\left. + \: \sum\limits_{k \in \mc{J}_Y} |\mb{Z}_{jk}| |i_L^k| + \sum\limits_{k \in \mc{J}_{\Delta}} |\mb{Z}_{jl_k}^{\Delta}| |i_{L}^k|\right]. }\label{eqn:upperboundforallcurrents}
\end{IEEEeqnarray}
The self-mapping property holds if the right-hand side of \eqref{eqn:upperboundforallcurrents} is upper-bounded by $R$ for all $j \in \mc{J}$, which is ensured by \eqref{eqn:selfmappingcondition}. 
\subsection{Contraction property}
\label{subsec:contraction}
To obtain a sufficient condition for the contraction property, we seek to find an upper-bound for the term $\|\mb{T}(\mb{u})-\mb{T}(\tilde{\mb{u}})\|_{\infty}$ proportional to  $\|\mb{u} - \tilde{\mb{u}}\|_{\infty}$.  Setting $\mb{v}=\mb{\Lambda}\mb{u}$ and $\tilde{\mb{v}} =\mb{\Lambda} \tilde{\mb{u}}$, the $j$-th entry of $\mb{T}(\mb{u})-\mb{T}(\tilde{\mb{u}})$ is upper-bounded as follows:
\begin{IEEEeqnarray}{rCl}
\IEEEeqnarraymulticol{3}{l}{|\mb{T}(\mb{u})_j - \mb{T}(\tilde{\mb{u}})_j| } \notag \\
\IEEEeqnarraymulticol{3}{l}{= \left|\frac{1}{\lambda_j} \mb{Z}_{j\bullet} \left[\mb{i}_{\mr{PQ}}(\mb{v})+ \mb{i}_{\mr{I}}(\mb{v})\right]  - \: \frac{1}{\lambda_j}\mb{Z}_{j\bullet}\left[ \mb{i}_{\mr{PQ}}(\tilde{\mb{v}})+ \mb{i}_{\mr{I}}(\tilde{\mb{v}}) \right]\right|} \notag \\
\IEEEeqnarraymulticol{3}{l}{\le \frac{1}{|\lambda_j|}  \left(   \left| \sum\limits_{k \in \mc{J}_Y}  \mb{Z}_{jk} \left[ \mb{i}_{\mr{PQ}}(\mb{v})_k - \mb{i}_{\mr{PQ}}(\tilde{\mb{v}})_k\right] \right| \right. } \IEEEeqnarraynumspace \notag \\
&&  \left. + \:  \left| \sum\limits_{k \in \mc{J}_\Delta} \mb{Z}_{jk} \left[ \mb{i}_{\mr{PQ}} (\mb{v})_k - \mb{i}_{\mr{PQ}}(\tilde{\mb{v}})_k \right] \right| \right. \notag  \\
&& \left. + \: \left| \sum\limits_{k \in \mc{J}_Y} \mb{Z}_{jk} \left[ \mb{i}_{\mr{I}}(\mb{v})_k - \mb{i}_{\mr{I}}(\tilde{\mb{v}})_k \right] \right| \right. \notag \\
 && \left. +\: \left| \sum\limits_{k \in \mc{J}_\Delta} \mb{Z}_{jk} \left[ \mb{i}_{\mr{I}}(\mb{v})_k - \mb{i}_{\mr{I}}(\tilde{\mb{v}})_k \right] \right| \right). \label{eqn:diff} \IEEEeqnarraynumspace 
\end{IEEEeqnarray}

The expression in \eqref{eqn:diff} is a sum of  four terms. In what follows, we  find an upper bound for each term.  

For the first term we have that
\begin{IEEEeqnarray}{rCl}
\IEEEeqnarraymulticol{3}{l}{\left| \sum\limits_{k \in \mc{J}_Y}  \mb{Z}_{jk} \left[ \mb{i}_{\mr{PQ}}(\mb{v})_k - \mb{i}_{\mr{PQ}}(\tilde{\mb{v}})_k\right] \right| } \notag\\
 &=& \left| \sum\limits_{k \in \mc{J}_Y} \mb{Z}_{jk} \left[ (\frac{s_L^k}{v_k})^* - (\frac{s_L^k}{\tilde{v}_k})^* \right] \right|  \le  \sum\limits_{k \in \mc{J}_Y}   \frac{ |\mb{Z}_{jk}|  |s_L^k|} {|v_k| |\tilde{v}_k|} |v_k - \tilde{v}_k|\notag \\
& \stackrel{\text{(a)}}{\le}&  \sum\limits_{k \in \mc{J}_Y}  \frac{ |\mb{Z}_{jk}| |s_L^k|}{ (|w_k|-R|\lambda_k|)^2} |\lambda_ku_k -\lambda_k \tilde{u}_k| \notag \\
& \stackrel{\text{(b)}}{\le} & \sum\limits_{k \in \mc{J}_Y} \frac{ |\mb{Z}_{jk}||s_L^k|}{ |w_k|^2 (1 - R \bar{\lambda}/ \underline{w})^2} |\lambda_k| \|\mb{u} - \tilde{\mb{u}}\|_{\infty}, \IEEEeqnarraynumspace \label{eqn:iypqdiff}
\end{IEEEeqnarray}
where inequality (a) is due to \eqref{eqn:linetoneutralvoltagesLowerBound1} and (b) is due to \eqref{eqn:linetoneutralvoltagesLowestBound}. 

Using Lemma \ref{lemma1}  for the second term in \eqref{eqn:diff} yields:
\begin{IEEEeqnarray}{rCl}
&&\left| \sum\limits_{k \in \mc{J}_\Delta}  \mb{Z}_{jk} \left[ \mb{i}_{\mr{PQ}}(\mb{v})_k - \mb{i}_{\mr{PQ}}(\tilde{\mb{v}})_k\right] \right| \notag \\
 &=& \left| \sum\limits_{k \in \mc{J}_\Delta} \mb{Z}_{jl_k}^{\Delta} \left[ \left(\frac{s_L^k}{\mb{e}_k^T\mb{v}_n}\right)^* - \left(\frac{s_L^k}{\mb{e}_k^T \tilde{\mb{v}}_n}\right)^* \right] \right| \notag \\
& \le & \sum\limits_{k \in \mc{J}_\Delta}   \frac{ |\mb{Z}_{jl_k}^{\Delta}|  |s_L^k|} {|(\mb{e}_k^T \mb{v}_n)| |(\mb{e}_k^T \tilde{\mb{v}}_n)|} |\mb{e}_k^T (\mb{v}_n - \tilde{\mb{v}}_n)| \notag \\\
& \stackrel{\text{(a)}}{\le}&  \sum\limits_{k \in \mc{J}_\Delta}  \frac{ |\mb{Z}_{jl_k}^{\Delta}| |s_L^k| \|\mb{e}_k\|_1 \|\mb{v}_n -\tilde{\mb{v}}_n\|_{\infty}}{ (|\mb{e}_k^T \mb{w}_n| - 2R \max_{l \in \mc{J}_n} \{ |\lambda_l| \})^2}  \notag \\
&\le & \sum\limits_{k \in \mc{J}_\Delta} \frac{ 2|\mb{Z}_{jl_k}^{\Delta}| |s_L^k| \| \mb{v}_n - \tilde{\mb{v}}_n \|_{\infty} }{|\mb{e}_k^T\mb{w}_n|^2 (1 - 2R \max_{l \in \mc{J}_n} \{ |\lambda_l| \} / |\mb{e}_k^T\mb{w}_n|)^2} \notag \\
& \stackrel{\text{(b)}}{\le} & \sum\limits_{k \in \mc{J}_\Delta} \frac{ 2|\mb{Z}_{jl_k}^{\Delta}| |s_L^k|}{|\mb{e}_k^T\mb{w}_n|^2 (1 - 2R \bar{\lambda} / \underline{\rho})^2} \max_{l \in \mc{J}_n} \{ |\lambda_l| \} \| \mb{u}_n - \tilde{\mb{u}}_n \|_{\infty}\notag \\
& \le & \sum\limits_{k \in \mc{J}_\Delta} \frac{ 2|\mb{Z}_{jl_k}^{\Delta}| |s_L^k|}{|\mb{e}_k^T\mb{w}_n|^2 (1 - 2R \bar{\lambda} / \underline{\rho})^2} \max_{l \in \mc{J}_n} \{ |\lambda_l| \} \| \mb{u} - \tilde{\mb{u}} \|_{\infty}, \IEEEeqnarraynumspace \label{eqn:ideltapqdiff}
\end{IEEEeqnarray}
where $n=\texttt{Node}[k]$,  (a) is due to H\"older's inequality and \eqref{eqn:linetolinevoltagesLowerBound1}, and (b) is  due to  \eqref{eqn:linetolinevoltagesLowestBound}.

For the third term in \eqref{eqn:diff} we have that:
\begin{IEEEeqnarray}{rCl}
&& \left| \sum\limits_{k \in \mc{J}_Y} \mb{Z}_{jk} \left[ \mb{i}_\mr{I} (\mb{v})_k - \mb{i}_\mr{I} (\tilde{\mb{v}})_k \right]\right|  \notag \\
&=& \left| \sum\limits_{k \in \mc{J}_Y} \mb{Z}_{jk} \left[ \frac{v_k}{|v_k|} i_L^k - \frac{\tilde{v}_k}{|\tilde{v}_k|} i_L^k \right] \right|  \notag \\
&\stackrel{\text{(a)}}{\le} &  \sum\limits_{k \in \mc{J}_Y} \frac{ 2|\mb{Z}_{jk}| |i_L^k|}{|v_k|} |v_k - \tilde{v}_k| \stackrel{\text{(b)}}{\le} \sum\limits_{k \in \mc{J}_Y} \frac{2|\mb{Z}_{jk}| |i_L^k|}{|w_k|- R|\lambda_k|} |v_k - \tilde{v}_k| \notag \\
& \stackrel{\text{(c)}}{\le} & \sum\limits_{k \in \mc{J}_Y} \frac{2|\mb{Z}_{jk}| |i_L^k|}{|w_k|( 1- R\bar{\lambda} / \underline{w})} |\lambda_k| \|\mb{u} - \tilde{\mb{u}}\|_{\infty}, \label{eqn:iyIdiff} 
\end{IEEEeqnarray}
where inequality (a) is due to Lemma \ref{lemma3}, (b) is due to \eqref{eqn:linetoneutralvoltagesLowerBound1}, and (c) is due to \eqref{eqn:linetoneutralvoltagesLowestBound} and the fact that $\|u_k - \tilde{u}_k\| \le \|\mb{u}- \tilde{\mb{u}}\|_{\infty}$. 

For the fourth term, Lemma \ref{lemma2} is used to obtain:
\begin{IEEEeqnarray}{rCl}
&& \left| \sum\limits_{k \in \mc{J}_\Delta} \mb{Z}_{jk} \left[ \mb{i}_\mr{I} (\mb{v})_k - \mb{i}_\mr{I} (\tilde{\mb{v}})_k \right]\right|  \notag \\
&=& \left| \sum\limits_{k \in \mc{J}_\Delta} \mb{Z}_{jl_k}^{\Delta} \left[ \frac{\mb{e}_k^T \mb{v}_n}{|\mb{e}_k^T\mb{v}_n|} i_L^k - \frac{\mb{e}_k^T\tilde{\mb{v}}_n}{|\mb{e}_k^T\tilde{\mb{v}}_n|} i_L^k \right] \right|  \notag \\
&\stackrel{\text{(a)}}\le &  \sum\limits_{k \in \mc{J}_\Delta} \frac{2 |\mb{Z}_{jl_k}^{\Delta}| |i_L^k|}{|\mb{e}_k^T\mb{v}_n|} |\mb{e}_k^T\mb{v}_n - \mb{e}_k^T\tilde{\mb{v}}_n| \notag \\
& \stackrel{\text{(b)}}{\le}& \sum\limits_{k \in \mc{J}_\Delta} \frac{2|\mb{Z}_{jl_k}^{\Delta}| |i_L^k| \|\mb{e}_k\|_1 \|\mb{v}_n - \tilde{\mb{v}}_n\|_{\infty}}{|\mb{e}_k^T \mb{w}_n|-2 R\max_{l \in \mc{J}_n} \{ |\lambda_l| \}}   \notag \\
& \stackrel{\text{(c)}}{\le}& \sum\limits_{k \in \mc{J}_\Delta} \frac{4|\mb{Z}_{jl_k}^{\Delta}| |i_L^k|}{|\mb{e}_k^T \mb{w}_n|(1-2 R \bar{\lambda} / \underline{\rho})}    \max_{l \in \mc{J}_n}\{ |\lambda_l|\} \|\mb{u}_n - \tilde{\mb{u}}_n\|_{\infty} \notag \\
& \le& \sum\limits_{k \in \mc{J}_\Delta} \frac{4|\mb{Z}_{jl_k}^{\Delta}| |i_L^k|}{|\mb{e}_k^T \mb{w}_n|(1-2 R \bar{\lambda} / \underline{\rho})}    \max_{l \in \mc{J}_n}\{ |\lambda_l|\} \|\mb{u} - \tilde{\mb{u}}\|_{\infty} , \label{eqn:ideltaIdiff}
\end{IEEEeqnarray}
where $n=\texttt{Node}[k]$, (a) is due to Lemma \ref{lemma3},   (b) is due to \eqref{eqn:linetolinevoltagesLowerBound1} and H\"older's inequality, and (c) is due to \eqref{eqn:linetolinevoltagesLowestBound}. 

Using \eqref{eqn:iypqdiff}, \eqref{eqn:ideltapqdiff}, \eqref{eqn:iyIdiff}, and \eqref{eqn:ideltaIdiff} in \eqref{eqn:diff} yields: 
\begin{IEEEeqnarray}{rCl}
\IEEEeqnarraymulticol{3}{l}{|\mb{T}(\mb{u})_j - \mb{T}(\tilde{\mb{u}})_j| \le \frac{1}{|\lambda_j|} \left[ \sum\limits_{k \in \mc{J}_Y} \frac{ |\mb{Z}_{jk}||s_L^k| |\lambda_k|}{ |w_k|^2 (1 - R \bar{\lambda}/ \underline{w})^2}  \right.} \notag \\
&&\left.  + \: \sum\limits_{k \in \mc{J}_\Delta} \frac{ 2|\mb{Z}_{jl_k}^{\Delta}| |s_L^k|\max_{l \in \mc{J}_{\texttt{Node}[k]}} \{ |\lambda_l| \} }{|\mb{e}_k^T\mb{w}_{\texttt{Node}[k]}|^2 (1 - 2R \bar{\lambda} / \underline{\rho})^2} \right. \notag \\
&&\left. + \: \sum\limits_{k \in \mc{J}_Y} \frac{2|\mb{Z}_{jk}| |i_L^k| |\lambda_k| }{|w_k|( 1- R\bar{\lambda} / \underline{w})} \right. \notag \\
&&\left. + \: \sum\limits_{k \in \mc{J}_\Delta} \frac{4|\mb{Z}_{jl_k}^{\Delta}| |i_L^k| \max_{l \in \mc{J}_{\texttt{Node}[k]}}\{ |\lambda_l|\}}{|\mb{e}_k^T \mb{w}_{\texttt{Node}[k]}|(1-2 R \bar{\lambda} / \underline{\rho})}   \right] \| \mb{u}- \tilde{\mb{u}}\|_{\infty} \IEEEeqnarraynumspace \label{eqn:diffdistance1}
\end{IEEEeqnarray}
For  $\mb{T}(\mb{u})$ to be a contraction, the coefficient multiplying $\|\mb{u} - \tilde{\mb{u}}\|_{\infty}$ must be less than $1$ for all $j \in \mc{J}$, which is ensured by \eqref{eqn:contractioncondition}. 
%
%

\ifCLASSOPTIONcaptionsoff
  \newpage
\fi
\bibliographystyle{IEEEtran}
\bibliography{references}
\vspace{\baselineskip}

\begin{IEEEbiographynophoto}{Mohammadhafez Bazrafshan}
received the B.S. degree  from Iran University of Science and Technology in 2012, and the M.S. degree in 2014 from  the University of Texas at San Antonio  both in electrical engineering, where he is  currently a PhD student with research interests in smart power grids. 
\end{IEEEbiographynophoto}
\vspace{-17cm}
\begin{IEEEbiographynophoto}{Nikolaos Gatsis} received the Diploma degree in Electrical and Computer Engineering from the University of Patras, Greece, in 2005 with honors. He completed his graduate studies at the University of Minnesota, where he received the M.Sc. degree in Electrical Engineering in 2010, and the Ph.D. degree in Electrical Engineering with minor in Mathematics in 2012. He is currently an Assistant Professor with the Department of Electrical and Computer Engineering at the University of Texas at San Antonio. His research interests lie in the areas of smart power grids, renewable energy management, communication networks, and cyberphysical systems, with an emphasis on optimal resource management. Prof. Gatsis co-organized symposia in the area of Smart Grids in IEEE GlobalSIP 2015 and IEEE GlobalSIP 2016. He also served as a Technical Program Committee member for symposia in IEEE SmartGridComm 2013, IEEE SmartGridComm 2014, and IEEE SmartGridComm 2015.
\end{IEEEbiographynophoto}

\end{document}